\theoremstyle{plain}
\newtheorem{lem}{Lemma}[section]
\newtheorem{cor}[lem]{Corollary}
\newtheorem{thm}[lem]{Theorem}
\theoremstyle{definition}
\newtheorem{ex}[lem]{Example}
\newtheorem{rem}[lem]{Remark}
\newtheorem{dfn}[lem]{Definition}
\newcommand{\OO}{\mathcal{O}}   
\newcommand{\Ot}{\widetilde{\OO}}
\newcommand{\Ob}{\overline{\OO}}
\newcommand{\kb}{\bar k} 
\newcommand{\Ss}{\mathbf{S}}   
\newcommand{\Rr}{\mathbf{R}}  
\newcommand{\Qq}{\mathbf{Q}}   
\newcommand{\DF}{\mathbf{D}_F}   
\newcommand{\DFR}{\overline{\mathbf{D}}_F} 
\newcommand{\Z}{\mathbb{Z}}      
\newcommand{\F}{\mathbb{F}}     
\newcommand{\Ro}{\mathcal{R}}   
\newcommand{\hh}{\mathtt{h}} 
\newcommand{\hhr}{\overline{\hh}} 
\newcommand{\chh}{\mathtt{h}\text{-}CR}   
\newcommand{\chm}{\mathtt{h}\text{-}M}   
\newcommand{\End}{\operatorname{End}}  
\newcommand{\codim}{\operatorname{codim}}            
\begin{document}

\title[Motivic decompositions and Representations]{Motivic decompositions of twisted flag varieties and representations of Hecke-type algebras}

\author[A. Neshitov]{Alexander Neshitov}
\author[V. Petrov]{Victor Petrov}
\author[N. Semenov]{Nikita Semenov}
\author[K. Zainoulline]{Kirill Zainoulline}

\address{Alexander Neshitov, University of Ottawa, Canada / Steklov Institute at St.Petersburg, Russia}

\thanks{The first author was supported by the Trillium Foundation (Ontario)}

\address{Victor Petrov, Steklov Institute at St.Petersburg, Russia}

\thanks{The second author was supported by the Chebyshev
Laboratory, St. Petersburg
University via RF Government grant 11.G34.31.0026, by JSC `Gazprom Neft', and by RFBR grant 13-01-00429.}

\address{Nikita Semenov, University of Munich, Germany}

\thanks{The third author was supported by the SPP 1786 `Homotopy theory and algebraic geometry' (DFG)}

\address{Kirill Zainoulline, University of Ottawa, Canada}

\thanks{The last author was supported by the NSERC Discovery grant and the Early Researcher Award (Ontario).}

\subjclass[2010]{14F43, 14M15, 20C08, 14C15}

\keywords{linear algebraic group, torsor, flag variety, equivariant oriented cohomology, motivic decomposition, Hecke algebra}

\maketitle

\begin{abstract} 
Let $G$ be a split semisimple linear algebraic group over a field~$k_0$. Let $E$ be a $G$-torsor over a field extension $k$ of $k_0$. Let $\hh$ be an algebraic oriented cohomology theory in the sense of Levine-Morel. Consider a twisted form $E/B$ of the variety of Borel subgroups $G/B$ over $k$.

Following the Kostant-Kumar results on equivariant cohomology of flag varieties we establish an isomorphism between the Grothendieck groups of the $\hh$-motivic subcategory generated by $E/B$ 
and the category of finitely generated projective modules of certain Hecke-type algebra $H$ which
depends on the root datum of $G$, on the torsor $E$ and on the formal group law of the theory~$\hh$. 

In particular,
taking $\hh$ to be the Chow groups with finite coefficients $\F_p$ and $E$ to be a generic $G$-torsor we prove that all indecomposable submodules of an affine nil-Hecke algebra $H$ of $G$ with coefficients in $\F_p$ are isomorphic to each other and correspond to the (non-graded) generalized Rost-Voevodsky motive for $(G,p)$. \end{abstract}

\tableofcontents

\section{Introduction}

Let $G$ be a split semisimple linear algebraic group over a field $k_0$ and let $E$ be a $G$-torsor over a field extension $k$ of $k_0$. Consider a twisted form $E/B$ of the variety of Borel subgroups $G/B$ of $G$ over $k$. Observe that $E/B$ is a smooth projective variety over $k$ that in general has no rational points. For example, for $G=PGL_p$ and a non-split $E$, $E/B$ is a variety of complete flags of ideals in a central simple division algebra of a prime degree $p$ over $k$. 

Following \cite[\S64]{EKM}
consider the category of graded Chow motives  $CM(k,\F_p)$ of smooth projective varieties over $k$ with finite coefficients $\F_p$. 
According to \cite[Theorem~5.17]{PSZ} the motive $[E/B]$ of $E/B$ splits as a direct sum of Tate twists of some indecomposable motive $\Ro$, a generalization of the Rost-Voevodsky motive, i.e,
\[
[E/B]\simeq \bigoplus_{i\in I} \Ro(i).
\]
Hence, if $\langle [E/B]\rangle$ denotes a pseudo-abelian subcategory generated by the motive $[E/B]$, i.e., a minimal pseudo-abelian category containing $[E/B]$, then 
\[
\langle [E/B] \rangle=\langle \Ro(i) \rangle_{i\in I}.
\] 
Observe that in the non-graded case (in the category of motives $CM_*(k,\F_p)$ of \cite[\S64]{EKM}) all Tate twists become isomorphic and we have 
$\langle [E/B]_* \rangle=\langle \Ro_*\rangle$, where $[E/B]_*$ and $\Ro_*$ denote the respective non-graded motives.

The motive $\Ro$ has several remarkable properties (see \cite[\S5]{PSZ}).  If $p$ is not a torsion prime of~$G$, then $\Ro$ coincides with the motive of a point,
so $\langle [E/B] \rangle$ is generated by Tate twists $\F_p(i)$, $i=0..\dim G/B$. While being indecomposable over $k$,
the motive $\Ro$ becomes isomorphic to a direct sum of Tate twists over a splitting field $\kb$ of $E$ (as $\kb$ one can always take an algebraic closure of $k$ or a function field of $E/B$).
Moreover, the Poincar\'e polynomial of $\Ro$ over $\kb$
 is given by an explicit polynomial.
For example, if $G$ is an exceptional group of type $F_4$ and $p=3$, then
$\Ro|_{\kb}\simeq \F_3 \oplus \F_3(4) \oplus\F_3(8)$ for a non-split $E$.

Only very few facts are known concerning the subcategory $\langle [E/B] \rangle$ of Chow motives with integer coefficients. An integer version of the motive $\Ro$ was introduced and discussed in \cite{VZ}; in \cite{CPSZ}, \cite{CM}, \cite{SZh}  it was shown that $\langle [E/B] \rangle$ is not Krull-Schmidt (the uniqueness of a direct sum decomposition fails).

In the present paper we consider the category of  $\hh$-motives with coefficients in a commutative ring $\Rr=\hh(k)$, where $\hh$ is any algebraic oriented cohomology theory over $k$ in the sense of Levine-Morel \cite{LM}, e.g., Chow ring with integer or finite coefficients, $K$-theory,  algebraic cobordism $\Omega$ with coefficients in the Lazard ring.
Let $\langle [E/B] \rangle_\hh$ (resp. $\langle [E/B]_* \rangle_\hh$) denote its pseudo-abelian subcategory generated by the (resp. non-graded) $\hh$-motive of $E/B$. Our main result (Theorem~\ref{mainthm}) 
establishes isomorphisms between the Grothendieck groups
\begin{equation}\label{eq:maineq}
K_0(\langle [E/B] \rangle_\hh) \simeq K_0\big(\DFR^{(0)}\big)\quad \text{and}\quad K_0(\langle [E/B]_* \rangle_\hh) \simeq K_0\big(\DFR\big)
\end{equation}
of the category $\langle [E/B] \rangle_\hh$ (resp. $\langle [E/B]_* \rangle_\hh$) and the category of finitely generated projective modules over a certain $\Rr$-algebra $\DFR^{(0)}$ (resp. $\DFR$).
More precisely, the algebra $\DFR^{(0)}$ is the degree 0 component of the $\Rr$-algebra $\DFR$ defined using the formal push-pull operators (see Definition~\ref{def:ratalg}); it
depends on the root datum of $G$, on the formal group law $F$ of the theory $\hh$ and on the subring of rational cycles in $\hh(G/B)$.

If $E$ is a generic $G$-torsor, then $\DFR$ can be replaced by the formal affine Demazure algebra $\DF$.
The theory of such algebras and formal push-pull operators has been recently developed in \cite{CPZ}, \cite{HMSZ}, \cite{CZZ}, \cite{CZZ1}, \cite{CZZ2} motivated by Bernstein-Gelfand-Gelfand~\cite{BGG}, Demazure~\cite{De73}, \cite{De74}, Bressler-Evens~\cite{BE90}, \cite{BE92}, Kostant-Kumar~\cite{KK86}, \cite{KK90}, Brion \cite{Br97}, Totaro \cite{To} and Edidin-Graham \cite{EG}. The key properties of $\DF$ are 
\begin{itemize}
\item[-]
It is a free module over the $T$-equivariant oriented cohomology ring $\Ss=\hh_T(k)$ of a point, where $T$ is a split maximal torus in $G$ \cite{CZZ}.
\item[-]
Its $\Ss$-dual  $\DF^\star=Hom_\Ss(\DF,\Ss)$ is isomorphic to the $T$-equivariant oriented cohomology ring $\hh_T(G/B)$ of $G/B$ \cite{CZZ2}.
\item[-]
Its structure (generators and relations) is very close to those of the affine Hecke algebra \cite{HMSZ}.
\end{itemize}

For example, if $\hh(-) =CH(-;\F_p)$ is the Chow ring with finite coefficients, then $\DF^\star\simeq CH_T(G/B;\F_p)$ is the $T$-equivariant Chow ring and $\DF=\mathbf{H}_{nil,p}$ is the affine nil-Hecke algebra over $\F_p$ (in the notation of Ginzburg \cite[\S12]{Gi})  which is a free module of rank $|W|$ over the polynomial ring $\Ss=\F_p[x_1,\ldots,x_n]$, where $n$ is the rank of $G$ and $W$ is the Weyl group. 

For generic $E$ the isomorphisms \eqref{eq:maineq} then turn into (see Corollary~\ref{cor:nilHeckeex})
\[
K_0(\langle \Ro(i) \rangle_{i\in I}) \simeq K_0\big(\mathbf{H}_{nil,p}^{(0)}\big) \quad \text{and}\quad K_0(\langle \Ro_*\rangle) \simeq K_0\big(\mathbf{H}_{nil,p}\big),
\]
where the Tate twists $\Ro(i)$ correspond to indecomposable $\mathbf{H}_{nil,p}^{(0)}$-submodules. 
Moreover, there is a ring isomorphism 
\[
\mathbf{H}_{nil,p}\simeq Mat_{|W|/r}(\End(P_*)),
\]
where $P_*$ is the projective  $\mathbf{H}_{nil,p}$-module corresponding to $\Ro_*$ and $r$ is the $p$-part of the product of $p$-exceptional degrees of the group $G$.

The latter isomorphism specialized to $G=SL_n$ and $\hh=CH$ gives \cite[3.1.16]{Ro} and \cite[Prop. 3.5]{La}.
Indeed, in this case $E$ is split, $r=1$ and $\Ss$ is a free $\Ss^W$-module with $\hh(G/B)\simeq \Rr\otimes_{\Ss^W}\Ss$.
Then by Lemma~\ref{lem:surjc} one obtains that 
\[\mathbf{H}_{nil,p}\simeq \Ss^{W}\otimes_{\Rr}Mat_{n!}(\Rr)\simeq Mat_{n!}(\Ss^W).\]

In the paper we restrict ourselves to varieties $E/B$ of Borel subgroups only. 
However, by \cite{CPSZ} $B$ can be replaced by any special parabolic subgroup $P$ without affecting the isomorphism \eqref{eq:maineq} for non-graded motives.
For instance, for $G=PGL_{n}$, $\hh=CH(-;\Z)$ and $E$ corresponding to a generic central division algebra $A$ of degree $n$ we get
\[
K_0(\langle [SB(A)]_*\rangle)\simeq K_0(\mathbf{H}_{nil,\Z}), 
\]
where $SB(A)$ is the Severi-Brauer variety of $A$ and $\mathbf{H}_{nil,\Z}$ is the affine nil-Hecke algebra for $PGL_n$ with integer coefficients.

\medskip

The paper is organized as follows.
In section~\ref{sec:orcoh} we recall definitions and basic facts concerning  Borel-Moore homology $\hh$ and
the respective category of $\hh$-motives. We state a version of the K\"unneth isomorphism for cellular spaces.
In the next section we generalize it to the equivariant setting.
In section~\ref{sec:convo} we introduce the convolution product on the equivariant cohomology of products and study its properties. In the next section we identify the equivariant cohomology of $G$ with respect to the convolution product
with the endomorphism ring of $T$-equivariant cohomology of $G/B$ and then in section~\ref{sec:affdem} with the formal affine Demazure algebra.
In section~\ref{sec:ratalg} we introduce the notion of a rational algebra of push-pull operators $\DFR$ and identify
it with the subring of rational endomorphisms. In the last section we prove isomorphisms \eqref{eq:maineq} and provide applications and examples.

\medskip

\paragraph{\bf Acknowledgements} We are grateful to Victor Ginzburg for comments on representation theory of Hecke-type algebras.

\section{Oriented (co-)homology} \label{sec:orcoh}

We recall definitions of an algebraic oriented Borel-Moore homology and of the respective category of correspondences. We also recall a version of the K\"unneth isomorphism for cellular spaces (Lemmas~\ref{cellular} and \ref{cor1}). 

\medskip

Fix a smooth scheme $S$ over a field $k$.
Let $Sch_S$ denote the category of finite type quasi-projective separated $S$-schemes and
let $Sm_S$ denote its full subcategory
consisting of smooth quasi-projective $S$-schemes.

Following~\cite[Def.~5.1.3]{LM} consider 
an oriented graded Borel-Moore homology theory $\hh_\bullet$ 
defined on some admissible \cite[(1.1)]{LM} subcategory $\mathcal{V}$ of $Sch_S$. So that
there are pull-backs $f^*\colon \hh_\bullet(X) \to \hh_{\bullet+d}(Y)$ for l.c.i. morphisms $f\colon Y\to X$ in $\mathcal{V}$ of relative dimension $d$
and push-forwards $f_*\colon \hh_\bullet(Y) \to \hh_\bullet(X)$ for projective morphisms $f\colon X\to Y$ in $\mathcal{V}$.
According to \cite[Prop.~5.2.1]{LM} the Borel-Moore homology $\hh_\bullet$ restricted to $Sm_S$ defines an algebraic oriented cohomology theory $\hh^\bullet$ (with values in the category of graded commutative rings with unit) in the sense of \cite[Def.~1.1.2]{LM} by 
\[\hh^{\dim_S X-\bullet}(X):=\hh_\bullet(X), \quad X\in Sm_S.\]
If the (co-)dimension is clear from the context we will write simply $\hh(X)$.

\medskip

Following~\cite[\S 63]{EKM} and \cite[\S2]{VZ}  we define the category of $\hh$-correspondences $\chh(S)$ over $S$.
The objects are pairs $([X\to S],i)$, where $[X\to S]$ is an isomorphism class of a smooth projective map $X\to S$ and $i\in \Z$. 
The morphisms are defined by
\[Hom_{\chh(S)}(([Y\to S],i),([X\to S],j)):=\bigoplus_l Hom_{i-j}([Y_l\to S],[X\to S]),\]
taken over all connected components $Y_l$ of $Y$, where
\[
Hom_\bullet([Y_l\to S],[X\to S]):=\hh_{\dim_SY_l +\bullet}(Y_l \times_S X).
\]
The composition of morphisms is given by the correspondence product.
Namely, if $p_i\colon X_1\times_S X_2\times_S X_3 \to X_j\times_S X_{j'}$ denotes the projection obtained by removing the $i$-th coordinate, then given $\alpha\in \hh(X_1\times_S X_2)$ and $\beta\in \hh(X_2\times_S X_3)$
we set
\begin{equation}\label{eq:corrprod}
\beta\circ\alpha:=(p_2)_*(p_1^*(\beta)\cdot p_3^*(\alpha)) \in \hh(X_1\times_S X_3).
\end{equation}
The idempotent completion of $\chh(S)$ denoted by $\chm(S)$ is called the category of $\hh$-motives. 
We simply write $[X]$ for the respective class in $\chm(S)$.

We also consider the non-graded version of $\chh(S)$ and of $\chm(S)$ denoted by $\chh_*(S)$ and $\chm_*(S)$ respectively, were the objects are given by isomorphisms classes $[X\to S]$ of smooth projective maps and the morphisms are defined by
\[
Hom_{\chh_*(S)}([Y\to S],[X\to S]):=\hh(Y\times_S X).
\]

\begin{dfn}(cf. \cite[(CD')]{LM})\label{dfn:cell} Let $X$ be smooth projective over $S$. Suppose that there is a filtration
by proper closed subschemes 
\[
\emptyset=X_{-1}\subset X_0\subset X_1\subset\ldots\subset X_n=X\]
such that 
\begin{itemize}
\item each irreducible component $X_{ij}$ of $X_i\setminus X_{i-1}$ is a locally trivial affine fibration over $S$ of rank $d_{ij}$, and
\item the closure of $X_{ij}$ in $X$ admits a resolution of singularities $\widetilde X_{ij}\to \overline{X}_{ij}$ over~$S$;
we set $g_{ij}\colon\widetilde X_{ij}\to \overline{X}_{ij}\hookrightarrow X$ and, therefore, $(g_{ij})_*(1_{\widetilde X_{ij}})\in\hh_{d_{ij}}(X)$.
\end{itemize}
We call such $X$ (together with the filtration) a cellular space over $S$.
\end{dfn}

\begin{dfn}\label{dfn:CD}
We say that the theory $\hh$ satisfies the cellular decomposition (CD) property if given a cellular space $X$ over $S$
the respective elements $(g_{ij})_*(1_{\widetilde X_{ij}})$ form a $\hh(S)$-basis of $\hh(X)$.
\end{dfn}

\begin{ex} The property (CD) holds for any oriented Borel-Moore homology $\hh$ over a field $k$ of characteristic $0$.

Indeed, the same reasoning as in~\cite[Thm.~66.2]{EKM} shows that for every  $Z\in Sm_S$ there is an isomorphism 
\[\sum (g_{ij})_*(1)\times id_Z\colon\bigoplus_{ij}CH_{\bullet-d_{ij}}(Z)\to CH_\bullet(Z\times_SX).\]
By the Yoneda lemma (cf.~\cite[Lemma~63.9]{EKM}) the latter induces an isomorphism in the category $CM(S)$ (cf. \cite[Cor.~66.4]{EKM}).

Following~\cite[\S 2]{VY} consider the specialization functor $\Omega\text{-}M(S)\to CM(S)$, $[f\colon Y\to X]\mapsto f_*(1_Y)$. It is surjective on the classes of objects and morphisms. Moreover, for every $X$ the kernel of 
\[
\Omega_{\dim_S X}(X\times_S X)\longrightarrow CH_{\dim_SX}(X\times_S X)
\] is $\Omega_{\geqslant 1}(k)\cdot \Omega_{\bullet}(X\times_S X)$ by~\cite[Rem.4.5.6]{LM}. Hence for every $y$ in this kernel \[
y^{\circ (\dim_S X+1)} \in \Omega_{\dim_S X}(X\times_S X)\cap(\Omega_{\geqslant (dim_S X+1)}(k)\cdot \Omega_\bullet(X\times_S X)).\] So $y=0$ since $\Omega_{<0}(Y)=0$. Therefore, the kernel of \[
\End_{\Omega\text{-}M(S)}([X],i)\to \End_{CM(S)}([X],i)\] consists of nilpotents. 

Finally, by~\cite[Lemma 2.1]{VY} the isomorphism $\sum_{ij}(g_{ij})_*(1)$ in $CM(S)$ can be lifted to an isomorphism in the category $\Omega\text{-}M(S)$. Specializing it via $\Omega\to\hh$ we obtain the desired isomorphism.
\end{ex}

\begin{lem}\label{cellular} 
Assume that $\hh$ satisfies the property (CD). Let $X$ be a cellular space over $S$.
Then there is an isomorphism in $\chm(S)$
\[\sum_{ij} (g_{ij})_*(1_{\widetilde{X}_{ij}})\colon\bigoplus_{ij}([S],d_{ij})\to [X], \]
where $(g_{ij})_*(1_{\widetilde X_{ij}})\in\hh_{d_{ij}}(X)=Hom_{\chm(S)}(([S],d_{ij}),[X])$.
\end{lem}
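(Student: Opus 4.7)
The plan is to apply Yoneda's lemma in $\chm(S)$, as in \cite[Lemma~63.9]{EKM}: to see that the displayed morphism is an isomorphism it suffices to check that for every smooth projective $Z\to S$ the induced map on Hom-sets
\[
\bigoplus_{ij} Hom_{\chm(S)}\bigl([Z],([S],d_{ij})\bigr)\longrightarrow Hom_{\chm(S)}\bigl([Z],[X]\bigr)
\]
is a bijection. Unwinding the definitions this becomes a map
\[
\bigoplus_{ij}\hh_{\dim_S Z-d_{ij}}(Z)\longrightarrow \hh_{\dim_S Z}(Z\times_S X),
\]
whose $(ij)$-th component is composition with $(g_{ij})_*(1_{\widetilde X_{ij}})$ via the correspondence product~\eqref{eq:corrprod}.

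Next I would evaluate this product with $X_1=Z$, $X_2=S$, $X_3=X$: the triple fibre product collapses to $Z\times_S X$, the middle projection $p_2$ becomes the identity, and smooth base change applied to $Z\to S$ (together with the projection formula) identifies the image of $\alpha\in\hh(Z)$ with
\[
q^*(\alpha)\cdot (id_Z\times g_{ij})_*\bigl(1_{Z\times_S\widetilde X_{ij}}\bigr),
\]
where $q\colon Z\times_S X\to Z$ is the first projection. Thus the Hom-map above is nothing other than the $\hh(Z)$-linear expansion in the family $\{(id_Z\times g_{ij})_*(1_{Z\times_S\widetilde X_{ij}})\}_{ij}$.

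The key remaining step is to recognize that $Z\times_S X$, equipped with the filtration $\{Z\times_S X_i\}$, is itself a cellular space over $Z$ in the sense of Definition~\ref{dfn:cell}. Each stratum $Z\times_S X_{ij}$ is the base change of an affine fibration and so is a locally trivial affine fibration over $Z$ of rank $d_{ij}$; and since $Z\to S$ is smooth, $Z\times_S\widetilde X_{ij}$ is smooth over $k$ and $Z\times_S\widetilde X_{ij}\to Z\times_S\overline X_{ij}$ is projective and birational, hence a resolution of singularities of the closure of $Z\times_S X_{ij}$ in $Z\times_S X$. Applying the (CD) property of Definition~\ref{dfn:CD} to $Z\times_S X/Z$ then yields that the classes $(id_Z\times g_{ij})_*(1_{Z\times_S\widetilde X_{ij}})$ form an $\hh(Z)$-basis of $\hh(Z\times_S X)$, so the Hom-map above is an isomorphism as desired.

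The main technical point will be confirming this persistence of the cellular structure after base change: one needs to verify that smoothness of $Z\to S$ is enough to guarantee both the smoothness of $Z\times_S\widetilde X_{ij}$ over $k$ and the birationality of the base-changed resolution, so that Definition~\ref{dfn:cell} is honestly satisfied over $Z$. Once this is dispatched, the lemma falls out of Yoneda combined with (CD).
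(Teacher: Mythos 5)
Your proposal is correct and follows essentially the same route as the paper: the paper's proof simply invokes transversal base change to get the isomorphism $\sum (g_{ij})_*(1)\times id_Z\colon\bigoplus_{ij}\hh_{\bullet-d_{ij}}(Z)\to\hh_\bullet(Z\times_S X)$ for every smooth projective $Z\to S$ and then concludes by the Yoneda lemma, which is exactly your argument with the intermediate step (that $Z\times_S X$ with the pulled-back filtration is again a cellular space over $Z$, so that (CD) applies over the base $Z$) spelled out. This use of (CD) over a varying smooth base is consistent with how the paper itself applies it (e.g.\ to $X\times_S X\to X$ in Lemma~\ref{cor1}), so no further justification is needed.
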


\begin{proof}
Transversal base change implies that 
there is an isomorphism 
\[\sum (g_{ij})_*(1)\times id_Z\colon\bigoplus_{ij} \hh_{\bullet-d_{ij}}(Z\times_S S)\to \hh_\bullet(Z\times_SX)\]
for any $Z$ smooth projective over $S$. So by the Yoneda lemma (cf.~\cite[Lemma~63.9]{EKM}) it induces an isomorphism in $\chm(S)$ (cf. \cite[Cor.~66.4]{EKM}).
\end{proof}

\begin{lem}\label{cor1} Assume that $\hh$ satisfies the property (CD). Let $X$ be a cellular space over $S$.
The pairing $(\cdot,\cdot)\colon\hh(X)\otimes_{\hh(S)} \hh(X)\to\hh(S)$ given by $(a,b)=p_*(ab)$ is non-degenerate
and the map \[
f\colon(\hh(X\times_S X),\circ)\to \End_{\hh(S)}\hh(X)\quad\text{ given by }a\mapsto f_a, \; f_a(x)=(p_2)_*(p_1^*(x)\cdot a)\] is an $\hh(S)$-linear isomorphism of graded rings. In particular, it gives an $\hh(S)$-linear isomorphism
\[
(\hh_{\dim_S X}(X\times_S X),\circ)\simeq \End_{\chm(S)}(X).
\]
\end{lem}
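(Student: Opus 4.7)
The plan is to exploit the cellular Künneth formula of Lemma~\ref{cellular} together with the ``diagonal trick'' to simultaneously produce a dual basis of $\hh(X)$ under the pairing and establish the isomorphism $f$. First, applying Lemma~\ref{cellular} (with the cellular factor on the right) yields the decomposition
\[
\hh(X\times_S X)\;=\;\bigoplus_{ij}\, p_1^*\bigl(\hh(X)\bigr)\cdot p_2^*(e_{ij}),\qquad e_{ij}:=(g_{ij})_*(1_{\widetilde X_{ij}}),
\]
so that every class is uniquely of the form $a=\sum_{ij} p_1^*(b_{ij})\cdot p_2^*(e_{ij})$ with $b_{ij}\in\hh(X)$. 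Combining the projection formula with the transversal base-change identity $(p_2)_*\,p_1^*=p^*p_*$ (for the Cartesian square with top row $p_1,p_2\colon X\times_S X\to X$ and bottom arrow $p\colon X\to S$) I obtain
\[
f_a(x)\;=\;(p_2)_*\bigl(p_1^*(x)\cdot a\bigr)\;=\;\sum_{ij}(x,b_{ij})\,e_{ij}.
\]

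Next I specialize to the class $\Delta_X$ of the diagonal embedding $X\to X\times_S X$. A direct application of the projection formula together with the fact that $p_2$ restricted to the diagonal is the identity yields $f_{\Delta_X}=\mathrm{id}_{\hh(X)}$. Writing $\Delta_X=\sum p_1^*(b_{ij})\cdot p_2^*(e_{ij})$ in the Künneth basis and comparing coefficients in $\{e_{ij}\}$ on both sides of $f_{\Delta_X}(e_{kl})=e_{kl}$ gives
\[
(e_{kl},b_{ij})\;=\;\begin{cases}1,&(kl)=(ij),\\0,&\text{otherwise.}\end{cases}
\]
Hence $\{b_{ij}\}$ is the $\hh(S)$-linear dual basis to $\{e_{ij}\}$ under the pairing, which proves non-degeneracy.

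With the dual basis in hand, bijectivity of $f$ is immediate: given $g\in\End_{\hh(S)}\hh(X)$, the element $a_g:=\sum p_1^*(b_{ij})\cdot p_2^*(g(e_{ij}))$ satisfies $f_{a_g}=g$ by the formula for $f_a$ combined with the dual-basis identities, while uniqueness of the Künneth expansion of $a$ forces injectivity. That $f$ is a ring map, $f_{\alpha\circ\beta}=f_\alpha\circ f_\beta$, is a routine base-change and projection-formula computation on the triple product $X\times_S X\times_S X$ (on which the correspondence product $\circ$ is defined in~\eqref{eq:corrprod}). The graded statement $(\hh_{\dim_S X}(X\times_S X),\circ)\simeq\End_{\chm(S)}(X)$ follows by dimension counting, since a class $a\in\hh_n(X\times_S X)$ shifts degrees on $\hh(X)$ by $n-\dim_S X$, so degree-preserving endomorphisms correspond exactly to $n=\dim_S X$. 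The only mildly delicate point is keeping the transversality hypothesis satisfied at each base-change step, which is automatic because $X\to S$ is smooth projective.
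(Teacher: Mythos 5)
Your proof is correct. It rests on the same two pillars as the paper's argument --- the cellular decomposition of Lemma~\ref{cellular} together with property (CD) applied to $X\times_S X\to X$, and the key computation $f_{p_1^*(b)\,p_2^*(c)}(x)=(x,b)\,c$ --- but it organizes the remaining steps differently. For non-degeneracy the paper inverts the isomorphism $\bigoplus_{ij}([S],d_{ij})\to[X]$ in $\chm(S)$ and reads off elements $a_{ij}$ with $p_*(a_{ij}\cdot (g_{kl})_*(1))=\delta$; you instead extract the dual basis $\{b_{ij}\}$ from the K\"unneth expansion of the diagonal class via $f_{\Delta_X}=\mathrm{id}$, which is more constructive and makes the dual basis explicit. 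For bijectivity of $f$ the paper composes with the Gram isomorphism to get $\rho\colon\hh(X\times_S X)\to\hh(X)\otimes_{\hh(S)}\hh(X)$, checks $\rho\circ\pi=\mathrm{id}$, and concludes by counting ranks of free modules; you build explicit preimages $a_g=\sum p_1^*(b_{ij})\,p_2^*(g(e_{ij}))$ and get injectivity from uniqueness of the K\"unneth expansion --- note that this last step also needs the already-established non-degeneracy (to pass from $(x,c_{ij})=0$ for all $x$ to $c_{ij}=0$), so you should invoke it explicitly there. Your route buys an explicit dual basis and avoids the rank count; the paper's route avoids having to expand the diagonal in the K\"unneth basis. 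The multiplicativity of $f$ and the grading claim are treated at the same level of detail in both arguments, and your dimension count for the graded statement is accurate.
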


Observe that the endomorphism ring of $\hh(S)$-linear operators $\End_{\hh(S)}(\hh(X))$ is a graded ring.
Its $n$-th graded component consists of operators increasing the codimension by $n$.
By definition the subring of degree-$0$ operators (preserving the codimension) coincides with
$\End_{\chm(S)}(X)$.

\begin{proof}
By the previous lemma there is an isomorphism 
\[\bigoplus_{ij}\hh(S)=\bigoplus_{k=-\infty}^{\infty}Hom(([S],k),\oplus_{ij}([S],d_{ij}))\stackrel{\simeq}\to\bigoplus_{k=-\infty}^{\infty}Hom(([S],k),[X])=\hh(X),\]
where each component is given by $x\mapsto x\cdot (g_{ij})_*(1)$.
Let $\sum_{ij} a_{ij}\colon [X]\to\oplus_{ij}([S],d_{ij})$ be the inverse isomorphism in $\chm(S)$. Observe that \[a_{ij} \in Hom([X],([S],d_{ij}))=\hh_{\dim(X/S)-d_{ij}}(X).\] Since $a_{ij}\circ (g_{ij})_*(1)=p_*(a_{ij}\cdot (g_{ij})_*(1))=\delta_{i,j}$, the pairing $(\cdot,\cdot)$ is non-degenerate.

The pairing $(\cdot,\cdot)$ gives an isomorphism $\hh(X)\to Hom_{\hh(S)}(\hh(X),\hh(S))$ and, hence, an isomorphism 
$\End_{\hh(S)}\hh(X)\stackrel{\simeq}\to\hh(X)\otimes_{\hh(S)}\hh(X)$. Consider the composition
\[\rho\colon\hh(X\times_SX)\stackrel{f}\to\End_{\hh(S)}\hh(X)\stackrel{\simeq}\to\hh(X)\otimes_{\hh(S)}\hh(X)\]
and a map $\pi\colon\hh(X)\otimes\hh(X)\to\hh(X\times_SX)$ given by $\pi(a\otimes b)=p_1^*(a)\cdot p_2^*(b)$.

By definition, we have \[f_{p_1^*(a)p_2^*(b)}(x)=(p_2)_*(p_1^*(x)p_1^*(a)p_2^*(b))=(x,a)b.\] Hence, $\rho(\pi(a\otimes b))=a\otimes b$ and the map $\rho$ is surjective.
By the property (CD) for $X\times_S X\to X$, $\hh(X\times_S X)$ is a free $\hh(X)$-module of rank $rk_{\hh(S)}\hh(X)$. Thus, $\rho$ is a surjective homomorphism between free modules of the same rank, hence, it is an isomorphism. 
\end{proof}

Let $C$ be any pseudo-abelian category. For an object $X\in C$ consider a subcategory $\langle X\rangle$ generated by $X$, i.e.,
the smallest pseudo-abelian subcategory of $C$ that contains $X$.

\begin{lem}
The category $\langle X\rangle$ is equivalent to the category of finetely generated projective $End_C(X)$-modules.
\end{lem}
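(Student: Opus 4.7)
The plan is to construct an equivalence by means of the representable functor
\[
F := \operatorname{Hom}_C(X,-) \colon \langle X\rangle \longrightarrow \operatorname{proj-}A, \qquad A := \operatorname{End}_C(X),
\]
where $\operatorname{Hom}_C(X,Y)$ is viewed as a right $A$-module via precomposition. The target will be the category of finitely generated projective right $A$-modules. The strategy is the standard Morita-style argument: show that $F$ sends $X$ to the free rank-one module, commutes with the two constructions (finite direct sum, image of an idempotent) that generate $\langle X\rangle$ from $X$, and matches hom-groups.

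First I would give an explicit description of the objects of $\langle X\rangle$: since the smallest pseudo-abelian subcategory containing $X$ is obtained by closing $\{X\}$ under finite direct sums and images of idempotents, every object of $\langle X\rangle$ is isomorphic to a pair $(X^{\oplus n}, e)$ with $e = e^2 \in \operatorname{End}_C(X^{\oplus n}) \cong M_n(A)$, and
\[
\operatorname{Hom}_{\langle X\rangle}\bigl((X^{\oplus n},e),(X^{\oplus m},f)\bigr) \;=\; f\cdot M_{m\times n}(A)\cdot e.
\]
Similarly, every finitely generated projective right $A$-module is isomorphic to $e\cdot A^{\oplus n}$ for some idempotent $e\in M_n(A)$, and morphisms between two such modules are given by the analogous matrix formula. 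This already makes the sought equivalence look tautological.

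Next I would check that $F$ is well-defined on $\langle X\rangle$, which amounts to verifying that $F$ preserves finite direct sums (clear from the universal property of coproducts applied to the representable $\operatorname{Hom}_C(X,-)$) and images of idempotents (the image of $F(e)$ in $F(X^{\oplus n})\cong A^{\oplus n}$ is projective, being the image of an idempotent endomorphism of a free module, and it agrees with $F$ applied to the image of $e$). On morphisms, the identification
\[
\operatorname{Hom}_C(X^{\oplus n}, X^{\oplus m}) \;\stackrel{\sim}{\longrightarrow}\; M_{m\times n}(A) \;=\; \operatorname{Hom}_A(A^{\oplus n}, A^{\oplus m})
\]
is exactly the one given by $F$, so $F$ is fully faithful on the full subcategories spanned by finite direct sums of $X$. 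Restricting to the summands cut out by $e$ and $f$ on each side yields full faithfulness of $F$ on all of $\langle X\rangle$.

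Finally, essential surjectivity is immediate: any finitely generated projective right $A$-module is of the form $e\cdot A^{\oplus n}$ for some idempotent $e\in M_n(A)$, and this module is canonically identified with $F(X^{\oplus n}, e)$. Together with full faithfulness this gives the desired equivalence. There is no real obstacle here; the only thing to be careful about is the bookkeeping that the matrix presentation of hom-sets in $\langle X\rangle$ matches the matrix presentation of hom-sets between finitely generated projective $A$-modules, and that the passage from idempotents in $\operatorname{End}_C(X^{\oplus n})$ to idempotents in $M_n(A)$ is compatible with taking images on both sides.
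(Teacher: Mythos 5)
Your proposal is correct and follows essentially the same route as the paper: both use the representable functor $\operatorname{Hom}_C(X,-)$, identify every object of $\langle X\rangle$ with the image of an idempotent $e\in M_n(\operatorname{End}_C(X))$ acting on $X^{\oplus n}$, and match hom-sets on both sides via the matrix description $f\cdot M_{m\times n}(A)\cdot e$. The paper's argument is just a more compressed version of your bookkeeping, so nothing further is needed.
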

\begin{proof}
Denote $End_C(X)$ by $R$. Every element $Y$ of $\langle X\rangle$ is isomorphic to the image $p(X^{\oplus n})$ of some idempotent $p\in End_C(X^{\oplus n})=Mat_n(R)$ and $Hom_C(X,p(X^{\oplus n}))=p(R^n)$. Note that 
\[Hom_C(p(X^{\oplus n}),p'(X^{\oplus n'}))=p'Hom_{C}(X^{\oplus n},X^{\oplus n'})p=Hom_R(p(R^n),(p'R^{n'})).\]
Then the functor $Y\mapsto Hom_C(X,Y)$ establishes an equivalence between $\langle X\rangle$ and the category of finitely generated projective right $R$-modules.
\end{proof}
\begin{cor}\label{cor:end}
The category $\langle [E/B]\rangle_{\hh}$ (resp. $\langle [E/B]_*\rangle_{\hh}$) is equivalent to the category of finitely generated projective modules over the endomorphism ring of the (resp. non-graded) $\hh$-motive of $E/B$.
\end{cor}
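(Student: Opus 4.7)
The plan is to apply the preceding lemma as a black box, treating it as a Morita-style equivalence for pseudo-abelian categories, and specialize it to the motivic setting. First I would verify that the two hypotheses of the lemma hold in our situation. On the one hand, the ambient category $\chm(k)$ (respectively $\chm_*(k)$) is pseudo-abelian by construction, since it was defined in Section~\ref{sec:orcoh} as the idempotent completion of the correspondence category $\chh(k)$ (respectively $\chh_*(k)$). On the other hand, the subcategory $\langle [E/B]\rangle_\hh$ (respectively $\langle [E/B]_*\rangle_\hh$) is, by the very notation $\langle X\rangle$ introduced in the lemma, the smallest pseudo-abelian subcategory of $\chm(k)$ containing the object in question.

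With these identifications in place, the lemma yields an equivalence between $\langle [E/B]\rangle_\hh$ and the category of finitely generated projective right modules over $R := \End_{\chm(k)}([E/B])$, implemented by the Yoneda-type functor $Y \mapsto Hom_{\chm(k)}([E/B],Y)$. Under this functor, an object of the form $p([E/B]^{\oplus n})$ cut out by an idempotent $p\in Mat_n(R)$ is sent to the projective module $p(R^n)$, and the bijection on morphisms is exactly the identity $Hom_{\chm(k)}(p([E/B]^{\oplus n}),q([E/B]^{\oplus m})) = q\cdot Mat_{m\times n}(R)\cdot p = Hom_R(p(R^n),q(R^m))$ recorded in the proof of the lemma. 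The non-graded version is obtained verbatim by replacing $\chm(k)$ with $\chm_*(k)$ and $[E/B]$ with $[E/B]_*$.

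There is no real obstacle, the statement being a direct specialization. The only subtlety worth flagging is that in the graded category $\chm(k)$ the endomorphism ring consists of degree-preserving self-morphisms of $[E/B]$ (no Tate shifts are allowed), whereas in $\chm_*(k)$ all Tate twists collapse and the ring becomes correspondingly larger; this is precisely the distinction that in later sections will be reflected by the dichotomy between $\DFR^{(0)}$ and $\DFR$ in the isomorphisms \eqref{eq:maineq} of the main theorem.
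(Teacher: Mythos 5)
Your proposal is correct and matches the paper's intent exactly: the corollary is a direct specialization of the preceding lemma to $C=\chm(k)$ (resp.\ $\chm_*(k)$) and $X=[E/B]$ (resp.\ $[E/B]_*$), and the paper offers no further argument beyond that. Your closing remark about degree-preserving endomorphisms versus the non-graded ring is also consistent with the paper's later dichotomy between $\DFR^{(0)}$ and $\DFR$.
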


\section{The equivariant K\"unneth isomorphism}

In the present section we introduce an equivariant Borel-Moore homology following \cite[\S2]{CZZ} and \cite{HML}. We provide an equivariant analogue
of the K\"unneth isomorphism (Lemma~\ref{lem:eqkun}). 

\medskip

Let $G$ be a smooth group scheme over $S$. Consider an admissible subcategory $\mathcal{V}^G$ of the category of $G$-varieties $X\in Sch_S$ with $G$-equivariant morphisms. By a $G$-equivariant oriented (graded) Borel-Moore homology theory we will call an additive functor $\hh^G_\bullet$ from $\mathcal{V}^G$ to graded abelian groups such that 

\medskip

\paragraph{1} There are pull-backs for l.c.i. maps and push-forwards for projective maps that satisfy 
\begin{itemize}
\item[(TS)] (l.c.i. base change) For a Cartesian square 
$
\xymatrix{
 X'\ar[r]^{f'}\ar[d]_{g'} & Y'\ar[d]^{g}\\
X\ar[r]^{f} & Y}
$
where $f$ (hence $f'$) is l.c.i. and $g$ (hence $g'$) is projective, we have $f^*g_*=g'_*(f')^*$.
\item[(Loc)] (localization) If $U\subset X$ is an open $G$-equivariant embedding with $Z=X\setminus U$, then there is a right exact sequence:
\[\hh^G_\bullet(Z)\to\hh^G_\bullet(X)\to\hh^G_\bullet(U)\to 0.\]
\end{itemize}

\paragraph{2}
The functor $\hh_\bullet^G$ restricted to $Sm_S$ defines a graded $G$-equivariant oriented cohomology theory $\hh^\bullet_G$ in the sense of \cite{CZZ2} (we refer to \cite[\S2, A1-9]{CZZ2} for the precise definition) by
\[
 \hh^{\dim_S X-\bullet}_G(X):=\hh_\bullet^G(X),\quad X\in Sm_S.
 \]
 In addition to the axioms of \cite[\S2]{CZZ2} we require that $\hh_G$ satisfies the following stronger version of the homotopy invariance axiom:
  \begin{itemize}
  \item[(HI)] (extended homotopy invariance)  
  Let $p\colon Y\to X$ be a $G$-equivariant torsor of a vector bundle of rank $r$ over $X$, then the pull-back induced by projection
 \[p^*\colon   \hh^\bullet_G(X) \to \hh^\bullet_G(Y)\] is an isomorphism.
  \end{itemize}
If a variety is smooth we will always use the cohomology notation.

\begin{ex}\label{ex:equi}
Given a linear algebraic group $G$ over a field $k$ of characteristic zero an example of such $G$-equivariant Borel-Moore homology theory $\hh^G_\bullet$ was constructed in \cite{HML}
as follows.

Consider a system of $G$-representations $V_i$ and its open subsets $U_i\subseteq V_i$ such that
\begin{itemize}
\item $G$ acts freely on $U_i$ and the quotient $U_i/G$ exists as a scheme over $k$,
\item $V_{i+1}=V_i\oplus W_i$ for some representation $W_i$,
\item $U_i\subseteq U_i\oplus W_i\subseteq U_{i+1}$, and $U_i\oplus W_i\to U_{i+1}$ is an open inclusion, and
\item  $\codim(V_i\setminus U_i)$ strictly increases.
\end{itemize}
Such a system is called a good system of representations of $G$. 

Let $X\in Sch_k$ be a $G$-variety. Following~\cite[\S3 and \S5]{HML} the inverse limit induced by pull-backs 
\[
\varprojlim_i \hh_{\bullet-\dim G+\dim U_i}(X\times^G U_i),\quad X\times^G U_i=(X\times_k U_i)/G,
\] 
does not depend 
on the choice of the system $(V_i,U_i)$ and, hence, defines the $G$-equivariant oriented homology group $\hh^G_\bullet(X)$.
\end{ex}

In the present paper we will extensively use the following property (cf. \cite[\S2, A6]{CZZ2})
of an equivariant theory
\begin{itemize}
\item[(Tor)]
Let $X\to X/G$ be a $G$-torsor over $S$ and a $G'$-equivariant map for some group scheme $G'$ over $S$. Then
there is an isomorphism
\[
\hh_{G\times G'}^\bullet(X)\stackrel{\simeq}\longrightarrow \hh_{G'}^\bullet(X/G).
\]
that is natural with respect to the maps of pairs
\[(\phi,\gamma)\colon(X,G\times G')\to (X_1,G_1\times G_1'),
\quad \phi(x\cdot (g,g'))=\phi(x)\cdot\gamma(g,g').
\]
\end{itemize}
Observe that the theory of Example~\ref{ex:equi} satisfies this property 
by~\cite[Prop.~27]{HML}.

\medskip

We have the following equivariant analogues of Definitions~\ref{dfn:cell} and~\ref{dfn:CD}

\begin{dfn} Let $X\in \mathcal{V}^G$. Suppose that there is a filtration
by $G$-equivariant proper closed subschemes 
\[
\emptyset=X_{-1}\subset X_0\subset X_1\subset\ldots\subset X_n=X\]
such that 
\begin{itemize}
\item each irreducible component $X_{ij}$ of $X_i\setminus X_{i-1}$ is a $G$-equivariant (locally trivial) affine fibration over $S$ of rank $d_{ij}$, and
\item the closure of $X_{ij}$ in $X$ admits a $G$-equivariant resolution of singularities $g_{ij}\colon\widetilde X_{ij}\to \overline{X}_{ij}$ over~$S$.
\end{itemize}
We call such $X$ (together with the filtration) a $G$-equivariant cellular space over $S$.
\end{dfn}

\begin{dfn}
We say that the equivariant theory $\hh^G$ satisfies the cellular decomposition (CD) property if given a $G$-equivariant cellular space $X$ over $S$
the respective elements $(g_{ij})_*(1_{\widetilde X_{ij}})$ form a $\hh^G(S)$-basis of $\hh^G(X)$.
\end{dfn}

\begin{lem}\label{commute}
Suppose a morphism $f\colon X\to Y$ in $Sm_k$ factors as $f\colon X\stackrel{z}\to L\stackrel{j}\to Y$ where $p\colon L\to X$ is a vector bundle, $z\colon X\to L$ is a zero section and $j$ is an open embedding. 

Then for every projective map $a\colon Y'\to Y$ and $X'=X\times_Y Y'$ the following diagram of pull-back and push-forward maps commutes (we omit the grading)
\[
\xymatrix{
\hh(X')\ar[r]^{a'_*} & \hh(X)\\
\hh(Y')\ar[u]^{f'^*}\ar[r]^{a_*} & \hh(Y)\ar[u]_{f^*}
}
\]
\end{lem}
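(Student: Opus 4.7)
The plan is to split the given Cartesian square along the factorization $f = j\circ z$ and apply the base-change axiom (TS) to each of the two resulting Cartesian subsquares.

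First, I would form the intermediate pull-back $L' := L\times_Y Y'$, with projections $b\colon L'\to L$ (a pull-back of the projective map $a$, hence projective) and $c\colon L'\to Y'$ (a pull-back of the open embedding $j$, hence an open embedding). By associativity of fibered products, $X' = X\times_Y Y'$ is naturally identified with $X\times_L L'$, and under this identification the structure maps are $a'\colon X'\to X$ together with the base change $z'\colon X'\to L'$ of the zero section $z$, with $f' = c\circ z'$.

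Next, I would invoke (TS) twice. The morphism $j$ is l.c.i. as an open embedding, and the zero section $z$ is l.c.i. since it is a regular closed embedding of codimension equal to the rank of $L\to X$. Applying (TS) to the upper Cartesian square gives $j^*\circ a_* = b_*\circ c^*$, and applying it to the lower Cartesian square gives $z^*\circ b_* = a'_*\circ (z')^*$. Composing these identities, and using functoriality of l.c.i.\ pull-back along $f = j\circ z$ and $f' = c\circ z'$, yields
\[
f^*\circ a_* \;=\; z^*\circ j^*\circ a_* \;=\; z^*\circ b_*\circ c^* \;=\; a'_*\circ (z')^*\circ c^* \;=\; a'_*\circ f'^*,
\]
which is the claim.

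The only point to verify carefully is the geometric identification $X'\cong X\times_L L'$ together with the compatibility $f' = c\circ z'$, but this follows directly from the universal properties of the fibered products involved. There is no deep obstacle here: the hypotheses are arranged precisely so that $f$ is l.c.i., and the conclusion then becomes an essentially formal consequence of (TS). The explicit factorization hypothesis (rather than the cleaner ``$f$ is l.c.i.'') is presumably intended to make the argument go through using only the stated axioms, without having to appeal to a virtual normal bundle construction for a general l.c.i.\ morphism.
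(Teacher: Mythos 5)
Your geometric setup ($L'=L\times_Y Y'$, the identification $X'=X\times_L L'$, and the splitting of the square along $f=j\circ z$) agrees with the paper, and your first base-change step, $j^*a_*=b_*(j')^*$, is fine because $j$ is an open embedding, hence flat, so the square is automatically transverse. The gap is the second step, where you apply (TS) to the square built on the zero section $z$ on the sole grounds that $z$ is l.c.i. For oriented (Borel--Moore) theories in the sense of Levine--Morel, the identity $f^*g_*=g'_*(f')^*$ is only an axiom for \emph{transverse} (Tor-independent) Cartesian squares; for a non-flat regular embedding $z$ against an arbitrary projective $b$ the square need not be transverse, and the naive identity genuinely fails because of excess intersection. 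Concretely, take $z\colon\{0\}\hookrightarrow\mathbb{A}^1$ the zero section and $b\colon\{0\}\hookrightarrow\mathbb{A}^1$ the inclusion of the same point: in Chow groups $z^*b_*(1)=z^*([0])=0$ since $[0]=0$ in $CH_0(\mathbb{A}^1)$, whereas the other composite is the identity. So $z^*b_*=a'_*(z')^*$ is exactly the assertion that needs an argument; it is not a formal instance of (TS). (The paper's own proof signals this by justifying each base change with ``since $j$ and $j'$ are flat'' and ``since $p$ and $p'$ are flat''; and the factorization hypothesis exists precisely to make this step provable, not, as you suggest, merely to avoid a virtual normal bundle construction.)

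The paper closes this gap using homotopy invariance: since $z$ is a section of the flat bundle projection $p\colon L\to X$ and $z'$ a section of $p'\colon L'\to X'$, one has $z^*=(p^*)^{-1}$ and $(z')^*=(p'^*)^{-1}$, and base change is then applied to the square of the \emph{flat} maps $p,p'$ against the projective maps $a',b$, which is transverse; this gives $p^*a'_*=b_*(p')^*$ and hence $z^*b_*=a'_*(z')^*$. With that substitution the chain of equalities you wrote goes through verbatim. So your architecture is right, but the key equality for the $z$-square must be routed through homotopy invariance rather than a direct appeal to (TS).
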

\begin{proof}
Observe that the map $f'\colon X'\to Y'$ factors as $X'\stackrel{z'}\to L\times_YY'\stackrel{j'}\to Y'$ where $z'$ is the zero section of the vector bundle $p'\colon L'=L\times_YY'\to X'$ and $j'$ is an open embedding. Let $b$ denote the canonical map $L'\to L$. Since $j$ and $j'$ are flat, we have $j^*a_*=b_*j'^*$ by the l.c.i. base change for oriented theories. Note that by the homotopy invariance $z^*=(p^*)^{-1}$ and $z'^*=(p'^*)^{-1}$. Since $p$ and $p'$ are flat, $p^*a'_*=b_*p'^*$. Then $z^*b_*=a'_*z'^*$ and 
\[
f^*a_*=z^*j^*a_*=z^*b_*j'^*=a'_*z'^*j'^*=a'_*f'^*.\qedhere 
\]
\end{proof}
\begin{rem}
If $(V_i,U_i)$ is a good system of representations of Example~\ref{ex:equi}, then for any $G$-variety $X$ the connecting maps $X\times^G U_i\to X\times^G U_{i+1}$ factor as in~Lemma~\ref{commute}, i.e., we have $X\times^G U_i\to X\times^G (U_i\oplus W_i)\to X\times^G U_{i+1}$.
\end{rem}

\begin{ex}\label{eqcellular}
Let $\hh^G$ be the equivariant theory of Example~\ref{ex:equi}. Then the property (CD) holds for $\hh^G$.

Indeed,
consider a good system of representations $\{(V_j,U_j)\}_j$ for $X$. The subvarieties $X_i\times^GU_j$, $i=0\ldots n$ form a cellular filtration on $X\times^GU_j$ over $S\times^GU_j$. Note that $\widetilde X_i \times^GU_j$ is a resolution of singularities of $X_i\times^GU_j$. By~(CD) for $\hh$ the set $\{(f_i\times^Gid_{U_j})_*(1)\}_i$ forms a basis of $\hh(X\times^GU_j)$ as a $\hh(S\times^GU_j)$-module. By~Lemma~\ref{commute} the following diagram commutes:
\[
\xymatrix{
\hh(\widetilde X_i\times^GU_{j+1})\ar[rr]^{(g_{i,j+1})_*}\ar[d]^{\widetilde i_j^*} && \hh(X\times^GU_{j+1})\ar[d]^{i_j^*}\\
\hh(\widetilde X_i \times^GU_{j})\ar[rr]^{(g_{i,j})_*} && \hh(X\times^GU_{j})
}
\]
So
$i_m^*((f_i\times^Gid_{U_{j+1}})_*(1))=(f_i\times^Gid_{U_j})_*(1)$, which implies that
the elements $f_{i*}(1)=\lim_j((f_i\times^Gid_{U_j})_*(1))$ form a basis of $\hh^G(X)$ over $\hh^G(S)$. 
\end{ex}

As for usual oriented theories we then obtain

\begin{lem}\label{lem:eqkun} Assume that $\hh^G$ satisfies the property (CD). Let $X$ be a $G$-equivariant cellular space over $S$.
Then the pairing $(\cdot,\cdot)\colon\hh^G(X)\otimes_{\hh^G(S)} \hh^G(X)\to\hh^G(S)$ given by $(a,b)=p_*(ab)$ is non-degenerate
and the map \[
f\colon(\hh^G(X\times_S X),\circ)\to \End_{\hh^G(S)}\hh^G(X)\quad\text{ given by }a\mapsto f_a, \; f_a(x)=(p_2)_*(p_1^*(x)\cdot a)\] is an $\hh^G(S)$-linear isomorphism of rings. In particular, there is an $\hh^G(S)$-linear isomorphism
\[
(\hh^G_{\dim_S X}(X\times_S X),\circ)\to \End_{\hh^G\text{-}M(S)}(\hh^G(X)),
\]
where $\hh^G\text{-}M(S)$ is the respective category of $G$-equivariant motives.
\end{lem}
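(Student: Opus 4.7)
The plan is to mirror the proof of Lemma~\ref{cor1} step-by-step, replacing (CD) with its $G$-equivariant version (verified in Example~\ref{eqcellular}) and the motivic category $\chm(S)$ with the equivariant motivic category $\hh^G\text{-}M(S)$.

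First I would establish the equivariant analogue of Lemma~\ref{cellular}: the morphism
\[
\sum_{ij}(g_{ij})_*(1_{\widetilde X_{ij}})\colon \bigoplus_{ij}([S],d_{ij})\to [X]
\]
is an isomorphism in $\hh^G\text{-}M(S)$. The argument is the same as in the non-equivariant case: transversal base change along a smooth projective $G$-equivariant map $Z\to S$ equips $Z\times_S X$ with a $G$-equivariant cellular filtration, so the equivariant (CD) yields an $\hh^G(S)$-linear isomorphism $\bigoplus_{ij}\hh^G_{\bullet-d_{ij}}(Z)\to \hh^G_\bullet(Z\times_S X)$, which by the Yoneda lemma in $\hh^G\text{-}M(S)$ lifts to the claimed motivic isomorphism.

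Composing with the inverse isomorphism produces a dual basis $a_{ij}\in \hh^G_{\dim_S X-d_{ij}}(X)$ satisfying $p_*(a_{ij}\cdot (g_{k\ell})_*(1))=\delta_{(ij),(k\ell)}$, establishing non-degeneracy of $(\cdot,\cdot)$ and an $\hh^G(S)$-linear isomorphism $\End_{\hh^G(S)}\hh^G(X)\simeq \hh^G(X)\otimes_{\hh^G(S)}\hh^G(X)$. I would then form the composition
\[
\rho\colon \hh^G(X\times_S X)\xrightarrow{f}\End_{\hh^G(S)}\hh^G(X)\xrightarrow{\simeq}\hh^G(X)\otimes_{\hh^G(S)}\hh^G(X)
\]
together with the section $\pi(a\otimes b)=p_1^*(a)\cdot p_2^*(b)$. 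The identity $f_{\pi(a\otimes b)}(x)=(x,a)\,b$ (the same calculation as in Lemma~\ref{cor1}, using the projection formula and transversal base change) gives $\rho\circ\pi=\mathrm{id}$, so $\rho$ is surjective; assuming source and target are free $\hh^G(X)$-modules of the same rank (see below), $\rho$ is an isomorphism. The degree-zero statement on $\End_{\hh^G\text{-}M(S)}(\hh^G(X))$ then follows from the grading compatibility of $f$.

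The main obstacle is precisely the freeness step: one must apply (CD) to the filtration $X\times_S X_i$ of $X\times_S X$ viewed over the base $X$ via $p_1$, with layers $X\times_S X_{ij}\to X$ that are $G$-equivariant affine fibrations of rank $d_{ij}$ and equivariant resolutions $X\times_S\widetilde X_{ij}$. Strictly speaking, (CD) is formulated relative to the distinguished base $S$, so for the theory of Example~\ref{ex:equi} I would argue directly via the Borel construction $(X\times_S X)\times^G U_j=(X\times^G U_j)\times_{S\times^G U_j}(X\times^G U_j)$, invoking Lemma~\ref{commute} to see that the $\hh(X\times^G U_j)$-bases at each finite level are compatible with the connecting pull-backs and therefore assemble into the desired $\hh^G(X)$-basis in the inverse limit of rank $\mathrm{rk}_{\hh^G(S)}\hh^G(X)$. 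For a general $\hh^G$ satisfying an appropriate relative (CD) over any smooth $G$-base, the same filtration argument yields the claim directly.
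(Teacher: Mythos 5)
Your proposal is correct and is essentially the argument the paper intends: Lemma~\ref{lem:eqkun} is stated with no written proof beyond the remark ``as for usual oriented theories we then obtain,'' i.e.\ the reader is expected to transcribe the proofs of Lemmas~\ref{cellular} and~\ref{cor1} verbatim into the equivariant setting using the equivariant (CD) property, exactly as you do. Your additional care about applying (CD) over the base $X$ rather than $S$ (via the Borel construction and Lemma~\ref{commute}) addresses a detail the paper silently glosses over, but it does not change the route.
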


\section{The convolution product}\label{sec:convo}

In the present section we introduce the convolution product on the equivariant Borel-Moore homology (Definition~\ref{def:conv}) of the product $G\times G\times\ldots \times G$. We relate this product to the usual correspondence product for the associated torsors (Lemma~\ref{thm:surjphi}) and study its behaviour under the base change (diagram~\eqref{diag:basech}).

\medskip

Let $G$ be a smooth algebraic group over $k$ and let $E$ be a $G$-torsor
over $k$ ($G$ acts on the right).
By definition there is an isomorphism $\rho\colon E\times_k G \stackrel{\simeq}\to E\times_k E$
given on points by $(e,g)\mapsto (e,eg)$. For each $i\ge 0$ it induces an isomorphism
\[
\rho_i\colon E\times_k G^{i} \longrightarrow E^{i+1}, \quad (e,g_1,g_2,\ldots,g_i)\mapsto (e,eg_1,eg_2,\ldots, eg_i).
\]
Consider the composition
\[
\gamma_i\colon E^{i+1}\stackrel{\rho_i^{-1}}\longrightarrow E\times_k G^{i} = E\times_k G^i \stackrel{pr}\longrightarrow G^i.
\]
The coordinate-wise right $G^{i+1}$-action on $E^{i+1}$ induces an action on $E\times_k G^{i}$ and, hence, on $G^{i}$. For instance, on points it is given by
\begin{equation}\label{eq:acti}
(e,g_1,\ldots,g_i)\cdot (h_1,\ldots,h_{i+1})=(eh_1,h_1^{-1}g_1h_2,\ldots,h_1^{-1}g_ih_{i+1}).
\end{equation}

Consider projections $p_j\colon E^{i+1} \to E^i$ obtained by removing the $j$-th coordinate
and the respective $G^i$-action on $E^i$. For each $i\ge 1$, $1\le j\le i+1$ there is a commutative diagram of $G^i$-equivariant maps
\begin{equation}\label{diag:equiv}
\xymatrix{
E^{i+1} \ar[r]^{\gamma_i} \ar[d]_{p_j} & G^i \ar[d]^{\pi_j} \\
E^i \ar[r]^{\gamma_{i-1}} & G^{i-1}
}
\end{equation}
where $\pi_1(g_1,\ldots,g_i)=(g_1^{-1}g_2,\ldots,g_1^{-1}g_i)$ and $\pi_{j}(g_1,\ldots,g_i)=(g_1,\ldots,\hat g_{j-1},\ldots,g_i)$ for $j>1$.

\begin{ex} For $i=1$ it gives a commutative diagram of $G$-equivariant maps
\[
\xymatrix{
E\times_k E \ar[r]^-{\gamma_1} \ar[d]_{p_j} & G \ar[d]^{\pi_j} \\
E \ar[r]^-{\gamma_{0}} & Spec\; k
}
\]
where $\gamma_0, \pi_1, \pi_2$ are the structure maps, $p_1,p_2$ are the corresponding projections and $\gamma_1(e,eg)=g$. Moreover, if $E$ is trivial, then $\gamma_1=\pi_1\colon G\times_k G\to G$, $(g_1,g_2)\mapsto g_1^{-1}g_2$.
\end{ex}

Let $H$ be an algebraic subgroup of $G$ such that $G/H$ is a smooth variety over~$k$.
We can view $G^i$ as an $H$-torsor over $G^i/H$, where $H$ acts on $G^i$ via the $j$th coordinate of $G^{i+1}$.
By definition, the $H^i$-equivariant map $\pi_j$ factors as
\[
\pi_j\colon G^i \stackrel{q}\longrightarrow G^i/H \stackrel{\bar\pi_j}\longrightarrow G^{i-1},
\]
where the second map $\bar\pi_j$ is a fibration with a fibre $G/H$.

\begin{ex}\label{ex:lemdiag}
The map $\pi_1$ factors through the quotient maps modulo the diagonal action
\[
\pi_1\colon G^i \stackrel{q}\longrightarrow G^i/\Delta(H) \stackrel{\bar{\pi}_1}\longrightarrow G^i/\Delta(G)=G^{i-1}.
\]
which are equivariant with respect to the usual coordinate-wise $H^i$-action.
\end{ex}

Consider an equivariant Borel-Moore homology theory $\hh$. For every $1\leqslant j\leqslant i+1$ consider the action of the $j$-th copy of $H$ on $G^i$.
The property (Tor) gives an isomorphism
\begin{equation}\label{eq:red}
\hh_{H^i}(G^i/H) \stackrel{\simeq}\longrightarrow \hh_{H^{i+1}}(G^i),
\end{equation}
where $H^{i+1}$ acts on $G^i$ as in \eqref{eq:acti}. Unless explicitly mentioned we will always identify these two rings.

Set $\Ss=\hh_H(G^0)=\hh_H(k)$ and set the convolution product on $\Ss$ to be the usual intersection product.

\begin{dfn}\label{def:conv} Assume that $G/H$ is a smooth projective variety over $k$.
We define the $\Ss$-linear convolution product $'\circ'$ on $\hh_{H^{i}}(G^{i-1})$, $i\ge 2$ to be the composite 
\[\hh_{H^{i}}(G^{i-1})\otimes \hh_{H^{i}}(G^{i-1})\stackrel{\bar\pi_{i-1}^*\otimes \bar\pi_{i+1}^*}\longrightarrow \hh_{H^{i+1}}(G^{i})\otimes \hh_{H^{i+1}}(G^{i})\stackrel{'\cdot'}\longrightarrow 
\]
\[\hh_{H^{i+1}}(G^{i}) \stackrel{(\bar\pi_{i})_*}\longrightarrow \hh_{H^{i}}(G^{i-1}),\]
where $\hh_{H^{i+1}}(G^{i})$ is identified with $\hh_{H^{i}}(G^{i}/H)$ via \eqref{eq:red} and $\bar\pi_i$ is projective because so is $G/H$.

\end{dfn}

The central object of the present paper is the convolution ring $(\hh_{H^2}(G),\circ)$, i.e., the case $i=2$.
In the next sections we will show that $(\hh_{B^2}(G),\circ)$ (where $B$ is a Borel subgroup of a semisimple split $G$) can be identified with the formal affine Demazure algebra.

\begin{ex} In the case $i=3$ the convolution ring $(\hh_{H^3}(G^2),\circ)$ is isomorphic to $\hh_{\Delta(H)}((G/H)^2)$
with respect to the usual correspondence product. 
Indeed, the maps $\pi_i\colon G^3 \to G^2$, $i=2,3,4$ induce $\Delta(H)$-equivariant projections $(G/H)^3\to (G/H)^2$.
The isomorphism then follows by (Tor).

Observe that if $G/H$ is an $H$-equivariant cellular space
and $\hh_H$ satisfies (CD), then by Lemma~\ref{lem:eqkun} there is an $\Ss$-linear ring isomorphism
\[
(\hh_{H^3}(G^2),\circ)\simeq \End_{\Ss} \hh_H(G/H).
\]
\end{ex}

\begin{lem}\label{lem:convprod} For $i\ge 1$ the map $\pi_1$ induces an injective ring homomorphism with respect to the convolution products
\[
(\hh_{H^{i}}(G^{i-1}),\circ) \stackrel{\bar\pi_1^*}\longrightarrow (\hh_{H^{i+1}}(G^{i}),\circ).
\]
\end{lem}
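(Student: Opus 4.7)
The proof splits into two verifications: that $\bar\pi_1^*$ respects the convolution products (i.e.\ is a ring homomorphism), and that it is injective.

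For multiplicativity, I would unwind Definition~\ref{def:conv} on both sides of $\bar\pi_1^*(\alpha\circ\beta)=\bar\pi_1^*(\alpha)\circ\bar\pi_1^*(\beta)$, reducing the claim to the equality
\[
\bar\pi_1^*(\bar\pi_i)_*\bigl(\bar\pi_{i-1}^*\alpha\cdot\bar\pi_{i+1}^*\beta\bigr) \;=\; (\bar\pi_{i+1})_*\bigl(\bar\pi_i^*\bar\pi_1^*\alpha\cdot\bar\pi_{i+2}^*\bar\pi_1^*\beta\bigr).
\]
The key input is the simplicial-type identity $\pi_1\pi_{k+1}=\pi_k\pi_1$ for every $k\ge 1$, which one checks directly from the formulas in diagram~\eqref{diag:equiv}. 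These identities assemble into Cartesian squares of $H$-equivariant schemes (the left-diagonal $G$-action on $G^{i+1}$ provides the required transversality). Applying the base-change axiom (TS) to the square for $\bar\pi_i$ versus $\bar\pi_{i+1}$ lets us swap $\bar\pi_1^*$ past $(\bar\pi_i)_*$; combined with the contravariant functoriality identities $\bar\pi_1^*\bar\pi_{i-1}^*=\bar\pi_i^*\bar\pi_1^*$ and $\bar\pi_1^*\bar\pi_{i+1}^*=\bar\pi_{i+2}^*\bar\pi_1^*$, and the multiplicativity of pull-back, the two expressions coincide.

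For injectivity, I would observe that under the identification~\eqref{eq:red} the map $\bar\pi_1$ coincides with the projection $G^i/\Delta(H)\to G^i/\Delta(G)=G^{i-1}$ from Example~\ref{ex:lemdiag}, which is a Zariski-locally trivial fibration with fiber $G/H$---namely the associated bundle of the left-diagonal $G$-torsor $G^i\to G^{i-1}$ along $G\to G/H$. Because $G/H$ is a smooth projective $H$-equivariant cellular variety (for $H$ parabolic, the Bruhat decomposition), this equips $G^i/H$ with an $H^i$-equivariant cellular filtration over $G^{i-1}$. Applying the equivariant (CD) property of Example~\ref{eqcellular} fibrewise, $\hh_{H^i}(G^i/H)$ is a free $\hh_{H^i}(G^{i-1})$-module via $\bar\pi_1^*$; in particular $\bar\pi_1^*$ is a split $\hh_{H^i}(G^{i-1})$-linear monomorphism.

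The main subtlety I anticipate is purely bookkeeping: tracking precisely which copy of $H\subset H^{i+1}$ is used to form each quotient in~\eqref{eq:red} at each step of the multiplicativity computation, and verifying that the base-change squares invoked are genuinely Cartesian as $H^i$-equivariant schemes rather than merely commutative.
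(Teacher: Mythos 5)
Your multiplicativity argument is essentially the paper's: the identities $\pi_j\circ\pi_1=\pi_1\circ\pi_{j+1}$ for $i-1\le j\le i+1$, the base-change axiom (TS) to commute $\bar\pi_1^*$ past $(\bar\pi_i)_*$, and functoriality of pull-backs. Two remarks, though. First, your unwinding of Definition~\ref{def:conv} ``on both sides'' only makes sense for $i\ge 2$: for $i=1$ the source is $\Ss=\hh_H(k)$ with the intersection product (there is no $\bar\pi_0$), so that case needs the separate, easy observation that $\circ$ on $\hh_{H^2}(G)$ is $\Ss$-bilinear with $\bar\pi_1^*(1)$ as unit-compatible image; the paper handles $i=1$ exactly this way. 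Second, and more substantively, your injectivity argument takes a genuinely different and heavier route: you realize $\bar\pi_1\colon G^i/\Delta(H)\to G^{i-1}$ as a $G/H$-bundle, invoke an $H^i$-equivariant cellular filtration of the total space over the base, and conclude freeness from (CD). That works when $G/H$ is an $H$-equivariant cellular space and $\hh^{H^i}$ satisfies (CD) (e.g.\ $H=B$, the case the paper ultimately uses), but the lemma is stated for any $H$ with $G/H$ smooth projective, and no cellularity is assumed there. The paper instead writes down the explicit $H^i$-equivariant section $(g_1,\dots,g_{i-1})\mapsto(1,g_1,\dots,g_{i-1})$ of $\bar\pi_1$; pulling back along it retracts $\bar\pi_1^*$, giving injectivity with no extra hypotheses and no appeal to (CD). You should either adopt that section argument or add the cellularity/(CD) hypotheses your argument requires.
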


\begin{proof}
For $i=1$ it follows from the fact that the convolution product on $\hh_{H^2}(G)$ is $\Ss$-linear.

For $i\ge 2$ for each $i-1\le j\le i+1$ we have $\pi_{j}\circ\pi_1=\pi_1\circ\pi_{j+1}$. Since push-forwards commute with flat pull-backs by (TS), there are commutative diagrams in equivariant cohomology
\[
\xymatrix{
\hh_{H^{i+1}}(G^{i}) \ar[r]^{\bar\pi_1^*} \ar[d]_{(\bar\pi_{i})_*}& \hh_{H^{i+2}}(G^{i+1}) \ar[d]_{(\bar\pi_{i+1})_*}\\
\hh_{H^{i}}(G^{i-1}) \ar[r]^{\bar\pi_1^*} \ar@<-5pt>[u]_{\bar\pi_{i-1}^*,\bar\pi_{i+1}^*} & \hh_{H^{i+1}}(G^{i})  \ar@<-5pt>[u]_{\bar\pi_{i}^*,\bar\pi_{i+2}^*} 
}
\]
Finally, there is a $H^i$-equivariant section of the map $\bar\pi_1\colon G^{i}/\Delta(H) \to G^{i-1}$ given by $(g_1,\ldots,g_{i-1})\mapsto (1,g_1,\ldots,g_{i-1})$,
so $\bar\pi_1^*$ is injective.
\end{proof}

\begin{lem}\label{thm:surjphi} 
The map $\gamma_1$ induces a ring homomorphism 
\[
(\hh_{H^2}(G),\circ)\stackrel{\gamma_1^*}\longrightarrow (\hh_{H^2}(E^2),\circ)\stackrel{\simeq}\longrightarrow (\hh((E/H)^2),\circ),
\] 
where the last ring is viewed with respect to the correspondence product \eqref{eq:corrprod}.
\end{lem}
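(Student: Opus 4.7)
The strategy is to verify, in parallel for each of the three projections $j=1,2,3$, that the ``$G$-side'' and ``$E$-side'' squares in \eqref{diag:equiv} (for $i=2$) descend to Cartesian squares between the appropriate $H$-quotients, and then to translate the convolution product step by step through these squares. Explicitly, I unpack $\alpha\circ\beta=(\bar\pi_2)_*(\bar\pi_1^*(\alpha)\cdot\bar\pi_3^*(\beta))$ on $\hh_{H^2}(G)$ and the parallel formula $(\mathrm{pr}_2)_*(\mathrm{pr}_1^*(\alpha)\cdot\mathrm{pr}_3^*(\beta))$ for the correspondence product \eqref{eq:corrprod} on $\hh((E/H)^2)$, and show each of the five operations (three pullbacks, one cup product, one pushforward) commutes with the passage $\hh_{H^2}(G)\to\hh_{H^2}(E^2)\simeq\hh((E/H)^2)$.

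For the Cartesian property: under the trivialization $\rho_i\colon E\times_k G^i\xrightarrow{\simeq}E^{i+1}$, the map $\gamma_i$ is simply the projection onto $G^i$, so each square in \eqref{diag:equiv} is, up to this identification, a product of the identity on $E$ with a square on $G$-factors and is trivially Cartesian. Passing to the quotient by the respective copy $H_j\subset H^{i+1}$ preserves this since the $H^{i+1}$-action on $E^{i+1}$ is free (as $E$ is an $H$-torsor), the $\gamma_i$ are $H^{i+1}$-equivariant by \eqref{diag:equiv}, and the twisted action \eqref{eq:acti} on $G^i$ matches the coordinate-wise action on $E^{i+1}$ under $\rho_i$. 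Property (Tor) then identifies $\hh_{H^2}(E^2)\simeq\hh((E/H)^2)$ and $\hh_{H^3}(E^3)\simeq\hh((E/H)^3)$ naturally, so the quotient maps $\bar p_j\colon E^3/H_j\to E^2$ correspond to the standard projections $\mathrm{pr}_j\colon(E/H)^3\to(E/H)^2$.

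With the diagram in place, I would apply: functoriality of pullback for $j=1,3$, giving $\gamma_2^*\bar\pi_j^*=\bar p_j^*\gamma_1^*$; axiom (TS) for $j=2$, giving $\gamma_1^*(\bar\pi_2)_*=(\bar p_2)_*\gamma_2^*$ — note $\bar p_2$ is projective because its fibre $E/H$ is a twisted form of the projective $G/H$ — and the fact that $\gamma_2^*$ respects cup products. Assembling these yields $\gamma_1^*(\alpha\circ\beta)=\gamma_1^*(\alpha)\circ\gamma_1^*(\beta)$ in $\hh((E/H)^2)$. The main obstacle is really more of a bookkeeping nuisance: keeping the three different $H$-quotients straight and verifying that each square remains Cartesian after passage to the quotient; once one observes that $\gamma_i$ intertwines the coordinate-wise $H^{i+1}$-action on $E^{i+1}$ with the action \eqref{eq:acti} on $G^i$, the proof reduces to the standard pattern of comparing pullback-pushforward across a Cartesian diagram.
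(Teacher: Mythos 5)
Your proposal is correct and follows essentially the same route as the paper: the paper's proof consists precisely of observing that the squares in \eqref{diag:equiv} (for $i=2$) yield, via (TS) for the pushforward $(\bar\pi_2)_*$ and functoriality for the pullbacks $\bar\pi_1^*,\bar\pi_3^*$, a commutative diagram intertwining the two convolution products, with the final identification $\hh_{H^2}(E^2)\simeq\hh((E/H)^2)$ given by (Tor). Your additional verification that the squares are Cartesian (via the trivialization $\rho_i$) and that $\bar p_2$ is projective because $E/H$ is a twisted form of the projective $G/H$ simply fills in details the paper leaves implicit.
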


\begin{proof}
By (TS) the diagram \eqref{diag:equiv} gives rise to commutative diagrams in cohomology \[
\xymatrix{
\hh_{H^3}(G^2) \ar[r]^{\gamma_2^*} \ar[d]_{(\bar\pi_2)_*}& \hh_{H^3}(E^3) \ar[d]_{(p_2)_*}\\
\hh_{H^2}(G) \ar[r]^{\gamma_1^*} \ar@<-10pt>[u]_{\bar\pi_{1}^*,\bar\pi_{3}^*} &   \hh_{H^2}(E^2) \ar@<-10pt>[u]_{p_{1}^*,p_{3}^*} 
}
\]
The last isomorphism follows by (Tor).
\end{proof}

Let $\kb$ denote the splitting field of a $G$-torsor $E$ so that $G_{\kb} = E_{\kb}$. 
Since the base change preserves the convolution product, combining Lemmas~\ref{lem:convprod} and~\ref{thm:surjphi} we obtain
two commutative diagrams of convolution (correspondence) rings
\[
\xymatrix{
\gamma_1^*\colon \hh_{H^2}(G) \ar[r]^-{pr^*} \ar[d]_{res_{\kb/k}} & \hh_{H^2}(E\times_k G) \ar[r]^{\rho_1^*}_{\simeq}& \hh_{H^2}(E^2) \ar[d]^{res_{\kb/k}}\\
\gamma_1^*\colon\hh_{H^2}(G_{\kb}) \ar[r]^-{\bar\pi_1^*} & \hh_{H^2}(G_{\kb}^2/\Delta(H)) \ar[r]^{q^*} & \hh_{H^2}(G_{\kb}^2)
}
\]
and
\[
\xymatrix{
\gamma_0^*\colon \hh_{H}(k) \ar[rr]^-{\rho_0^*\circ pr^*} \ar[d]_{res_{\kb/k}}^{\simeq} & & \hh_{H}(E)  \ar[d]^{res_{\kb/k}}  \\
\gamma_0^*\colon \hh_{H}(\kb) \ar[rr]^-{q^*\circ \bar\pi_1^*} & & \hh_{H}(G_{\kb})  
}
\]
where $res_{\kb/k}$ is the base change map. Combining these two diagrams we obtain a commutative diagram of convolution rings
\begin{equation}\label{diag:basech}
\xymatrix{
 \hh_H(E) \otimes_{\Ss} \hh_{H^2}(G) \ar[r]^-{(p_1^*,\gamma_1^*)} \ar[d]_{res_{\kb/k}} & \hh_{H^2}(E^2) \ar[d]^{res_{\kb/k}}\\
 \hh_H(G_{\kb}) \otimes_{\Ss} \hh_{H^2}(G_{\kb}) \ar[r]^-{(p_1^*,\gamma_1^*)} & \hh_{H^2}(G_{\kb}^2),
}
\end{equation}
where the left convolution rings are $\hh_H(E)$- and $\hh_{H}(G_{\kb})$-linear.

\section{The  subring of push-pull operators}

In the present section we prove that if $H$ is the Borel subgroup of a split semisimple linear algebraic group, then the convolution ring $\hh_{H^2}(G)$ of Definition~\ref{def:conv} can be identified with the subring of push-pull operators (Corollary~\ref{thm:endpres}). Our arguments are essentially based on the Bruhat decomposition of $G$ stated using the $G$-orbits on the product $G/H\times_k G/H$ and the resolution of singularities \eqref{eq:ressing}.

\medskip

As before assume that $G/H$ is a smooth projective variety over $k$.
In the notation of the previous section consider the $H^2$-equivariant maps of Example~\ref{ex:lemdiag}. 
\[
\pi_1\colon G^2\stackrel{q}\longrightarrow G^2/\Delta(H)\stackrel{\bar{\pi}_1}\longrightarrow G^2/\Delta(G)=G,\quad (g_1,g_2)\mapsto g_1^{-1}g_2.
\] 
Since $G^2$ is a $\Delta(G)$-torsor over $G$ ($\Delta(H)$-torsor over $G^2/\Delta(H)$), 
by the property (Tor) the induced $\Delta(G)\times H^2$-equivariant pull-backs on cohomology coincide with the forgetful maps
\begin{equation}\label{diag:comconst}
\xymatrix{
\gamma_1^*\colon\hh_{H^2}(G)\simeq\hh_{\Delta(G)\times H^2}(G^2) \ar@{^(->}[r]^-{\bar \pi_1^*} \ar[d]_\simeq& \hh_{\Delta(H)\times H^2}(G^2) \ar[r]^-{q^*} \ar[d]^\simeq & \hh_{H^2}(G^2) \ar[d]^\simeq\\
\hh_G((G/H)^2) \ar@{^(->}[r] & \hh_H((G/H)^2) \ar[r] & \hh((G/H)^2)
}
\end{equation}
Moreover, by Lemma~\ref{lem:convprod} it is a commutative diagram of convolution rings.

\medskip

Let $G$ be a split semisimple linear algebraic group over $k$ and let $\hh$ be an equivariant theory that satisfies property (CD). 
We fix a Borel subgroup $B$ of $G$ containing a split maximal torus $T$. 
By Bruhat decomposition (e.g. \cite{Sp})
\[
G=\amalg_{w\in W}B\dot{w} B,\quad \dot{w}\in N_T,
\] 
is the disjoint union of $B^2$-orbits of $G$, where $W=N_T/T$ is the Weyl group and $N_T$ is the normalizator of $T$ in $G$. Projecting this decomposition onto $X=G/B$ gives a $B$-equivariant cellular filtration on $X$ by closures $\overline{X}_w$ of affine spaces $X_w=B\dot{w} B/B$ of dimension $l(w)$ (the length of $w$).
The preimage $\pi_1^{-1}(B\dot{w} B)$ is a $\Delta(G)$-orbit in $G^2$ (here $H=B$). Let $\OO_w$ denote its image via $G^2\to X^2$ and let $\Ob_w$ denote its closure. Observe that both $\OO_w$ and $\Ob_w$ are $\Delta(G)$-invariant in $X^2$.

\medskip

By properties of the Bruhat decomposition (see \cite[\S1]{Sp}) it follows that the projection  $\OO_w \to X^2\to X$ is a torsor of a vector bundle over $X$ with fibre $X_w$. Indeed, the transition functions are affine since they are given by the action of $B$ on the left on $B\dot{w}B/B$ that is by $T$ acting on the product of the respective root subgroups 
$\prod_{\alpha\in \Phi^+\cap w(\Phi^-)} U_\alpha$ via the conjugation and, hence, by $T$ acting on the product of the respective $\mathbb{G}_a$'s via the multiplication $t\cdot x=\alpha(t)x$, $t\in T$, $x\in \mathbb{G}_a$.
So $X^2$ is a $G$-equivariant ($G$ acts diagonally) cellular space over $X$ with filtration given by the closures $\Ob_w$.

\medskip

Assume that for each $w\in W$ we are given a $G$-equivariant resolution of singularities $\Ot_w\to \Ob_w$. Let $[\Ot_w]_G$ denote the respective class in $\hh_G^{\dim_k X-l(w)}(X^2)$. Then by the property (CD) 
the cohomology $\hh_G(X^2)$ (resp. $\hh_B(X^2)$ and $\hh(X^2)$) is a free module over $\hh_G(X)$ (resp. over $\hh_B(X)$ and $\hh(X)$) with basis $\{[\Ot_w]_G\}_{w\in W}$ (resp. $\{[\Ot_w]_B\}_{w\in W}$ and $\{[\Ot_w]\}_{w\in W}$). Hence,
the forgetful maps of \eqref{diag:comconst}
send $[\Ot_w]_G\mapsto [\Ot_w]_B\mapsto [\Ot_w]$ and change the coefficients by $-\otimes_{\hh_G(X)}\hh_B(X)$ and $-\otimes_{\hh_B(X)}\hh(X)$ respectively, where the map $\Ss=\hh_G(X)\hookrightarrow \hh_B(X) \to \hh(X)$ is the classical characteristic map.

\medskip

We now construct such $G$-equivariant resolutions as follows.
For the $i$-th simple reflection $s_{i}$ we denote $X_{s_{i}}$ (resp. $\OO_{s_i}$) simply by $X_i$ (resp. by $\OO_i$).
Let $P_i$ be the minimal parabolic subgroup corresponding to a simple root $\alpha_i$ and let $q_i\colon X\to G/P_i$ denote the respective quotient map.

\begin{lem}\label{orbitasprod}
We have $\Ob_i=X\times_{G/P_i}X$ and, in particular, $\Ob_i$ is smooth.
\end{lem}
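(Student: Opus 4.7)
The plan is to compute $\OO_i$ and its closure set-theoretically inside $X\times X$ using the definitions, then identify the answer with the fiber product $X\times_{G/P_i}X$, and finally deduce smoothness from a standard base-change argument applied to the smooth morphism $q_i\colon X\to G/P_i$.

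First I would unwind the definitions. By construction $\pi_1\colon G^2\to G$ sends $(g_1,g_2)\mapsto g_1^{-1}g_2$, so
\[
\pi_1^{-1}(B\dot s_i B)=\{(g_1,g_2)\in G^2\mid g_1^{-1}g_2\in B\dot s_iB\},
\]
and its image $\OO_i\subset X^2$ consists of those pairs $(g_1B,g_2B)$ with $g_1^{-1}g_2\in B\dot s_iB$. Taking the closure amounts to replacing $B\dot s_iB$ by its Zariski closure in $G$. By the Bruhat decomposition for the rank-one parabolic, $P_i=B\sqcup B\dot s_iB$, and the closure of the big cell $B\dot s_iB$ inside $P_i$ is all of $P_i$. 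Since the map $G\to G$, $g\mapsto g_1^{-1}g_2$, is continuous in each variable and $\OO_i$ is $\Delta(G)$-invariant, this gives
\[
\Ob_i=\{(g_1B,g_2B)\in X^2\mid g_1^{-1}g_2\in P_i\}.
\]
On the other hand, $X\times_{G/P_i}X$ parametrizes pairs $(g_1B,g_2B)$ with $g_1P_i=g_2P_i$, i.e.\ $g_1^{-1}g_2\in P_i$, giving the same description. To upgrade this to an identification of schemes I would observe that both sides are reduced closed subschemes of $X\times X$ with the same $\kb$-points and with the same $\Delta(G)$-action, and that the fiber product $X\times_{G/P_i}X$ is obviously closed; a short check using the transitivity of the $G$-action identifies them as schemes.

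Smoothness then follows formally. The morphism $q_i\colon X=G/B\to G/P_i$ is a locally trivial fibration with fiber $P_i/B\simeq\mathbb{P}^1$, hence smooth. The second projection $X\times_{G/P_i}X\to X$ is the base change of $q_i$, so it is smooth, and $X$ is smooth; composition of a smooth morphism with a smooth base yields a smooth scheme, so $\Ob_i=X\times_{G/P_i}X$ is smooth.

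I do not expect a serious obstacle here. The only point that requires minor care is the scheme-theoretic (rather than merely set-theoretic) identification $\Ob_i=X\times_{G/P_i}X$; this is handled by noting that $X\times_{G/P_i}X$ is integral (being smooth and connected, as $X$ is connected and $q_i$ has connected fibers), has the correct dimension $\dim X+1$, and contains $\OO_i$ as a dense open subset. The rest is bookkeeping with the Bruhat decomposition of the minimal parabolic.
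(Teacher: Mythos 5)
Your proposal is correct and follows essentially the same route as the paper: both arguments rest on the decomposition $P_i=B\cup B\dot s_iB$ to identify the points of $X\times_{G/P_i}X$ with $\OO_i\cup\Delta_X=\Ob_i$. The paper's proof is just a terser, set-theoretic version of yours; your extra remarks on the scheme-theoretic identification and on smoothness via base change of $q_i$ are the standard justifications the paper leaves implicit.
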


\begin{proof}
We have $(g_1B,g_2B)\in X\times_{G/P_i}X$, $g_1,g_2\in G$ if and only if $g_1P_i=g_2P_i$, so $g_2=g_1h$ for some $h\in P_i$. Since $P_i=B\cup Bs_iB$, it means that either $g_2B=g_1B$ or $g_2B=g_1Bs_iB$, so $(g_1B,g_2B)\in \OO_{s_i}\cup\Delta_X=\Ob_i$.
\end{proof}

For any $w\in W$ we choose a reduced decomposition $w=s_{i_1}s_{i_2}\ldots s_{i_l}$ and set $I_w=(i_1,i_2,\ldots,i_l)$. Consider a variety
\begin{equation}\label{eq:ressing}
\Ot_{I_w}=X\times_{G/P_{i_1}}X\times_{G/P_{i_2}}\ldots\times_{G/P_{i_l}} X.
\end{equation}
The projection on the first and the last factor $pr\colon \Ot_{I_w}\to X\times_k X$ gives a $G$-equivariant resolution of singularities of $\Ob_w$. 

\begin{thm}\label{lem:imkun} For $H=B$ or $1$, the image of $[\Ot_{I_w}]_H \in \hh_H(X\times_k X)$ under the K\"unneth isomorphism 
\[
(\hh_H(X\times_k X),\circ)\stackrel{\simeq}\longrightarrow \End_{\hh_H(k)}(\hh_H(X))
\] 
is the composition of push-pull operators $q_{i_1}^*q_{i_1*}\circ\ldots\circ q_{i_l}^*q_{i_l*}$.
\end{thm}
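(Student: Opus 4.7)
The plan is to proceed by induction on the length $l$ of the reduced decomposition $I_w = (i_1, \ldots, i_l)$. The argument runs uniformly for $H=B$ or $H=1$, since all constructions are $H$-equivariant and the projection formula and the transversal base change axiom (TS) hold in the equivariant setting via Lemma~\ref{lem:eqkun}.

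For the base case $l=1$, the variety $\Ot_i = X \times_{G/P_i} X$ is embedded in $X \times_k X$ as a smooth closed subscheme via a regular embedding $\iota$, and $[\Ot_i]_H = \iota_*(1)$. Applying the projection formula to
\[
f_{[\Ot_i]_H}(z) = (p_2)_*\bigl(p_1^*(z) \cdot \iota_*(1)\bigr)
\]
reduces this to $(\operatorname{pr}_2)_*(\operatorname{pr}_1)^*(z)$, where $\operatorname{pr}_1, \operatorname{pr}_2\colon\Ot_i\to X$ are the two projections. The defining Cartesian square of $\Ot_i$ combined with (TS), which applies since $q_i\colon X\to G/P_i$ is a smooth projective $\mathbb{P}^1$-bundle, identifies this operator with $q_i^*q_{i*}$.

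For the inductive step, decompose the iterated fibre product as $\Ot_{I_w} = \Ot_{i_1} \times_X \Ot_{I_{w'}}$, where $I_{w'}=(i_2, \ldots, i_l)$ and the fibre product is taken via the second projection of $\Ot_{i_1}$ and the first projection of $\Ot_{I_{w'}}$. One then establishes the convolution identity
\[
[\Ot_{I_w}]_H = [\Ot_{I_{w'}}]_H \circ [\Ot_{i_1}]_H
\]
in the correspondence ring $(\hh_H(X\times_k X),\circ)$. The key computation takes place in $\hh_H(X\times_k X\times_k X)$: applying (TS) to the Cartesian squares obtained by pulling back the two classes along $p_3$ and $p_1$ respectively, one rewrites $p_3^*([\Ot_{i_1}]_H) = (\iota_1\times\mathrm{id})_*(1)$ and $p_1^*([\Ot_{I_{w'}}]_H) = (\mathrm{id}\times\pi')_*(1)$. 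The projection formula together with base change for a third Cartesian square, whose pullback is precisely $\Ot_{I_w}$ mapping into $X\times X\times X$ via $(x_0,\ldots,x_l)\mapsto(x_0,x_1,x_l)$, collapses the product to $\iota_*(1)$ for this map $\iota$. Finally $(p_2)_*\iota_*(1)$ equals the pushforward of $1$ along the first--last projection, which is exactly $[\Ot_{I_w}]_H$.

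Since $f$ is a ring homomorphism by Lemma~\ref{lem:eqkun}, applying $f$ to the convolution identity and combining with the base case and the inductive hypothesis applied to $I_{w'}$ yields the claimed composition of push-pull operators. The principal obstacle is the careful verification of the three Cartesian squares used in the inductive step, together with the hypotheses of (TS); these hold because each projection $\Ot_i\to X$ is a smooth projective $\mathbb{P}^1$-bundle, each embedding $\Ot_i\hookrightarrow X\times_k X$ is a regular embedding of smooth varieties, and these properties are preserved under taking products with identity maps and further fibre products over smooth bases.
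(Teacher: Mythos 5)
Your proof is correct, and it reorganizes the argument rather than merely reproducing it. The paper's proof is non-inductive: it first uses the projection formula and (TS) to rewrite the action of $[\Ot_{I_w}]_H$ as the single push-pull $pr_{1*}\circ pr_{l+1}^*$ along the outer projections of the iterated fibre product $\Ot_{I_w}$, and then unwinds that operator into $q_{i_1}^*q_{i_1*}\circ\cdots\circ q_{i_l}^*q_{i_l*}$ by one ladder of Cartesian squares. You instead induct on $l$, proving the stronger intermediate statement that the Bott--Samelson classes satisfy the convolution identity $[\Ot_{I_w}]_H=[\Ot_{I_{w'}}]_H\circ[\Ot_{i_1}]_H$, and then transport it through the ring isomorphism of Lemma~\ref{lem:eqkun}. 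The underlying toolkit is identical --- projection formula plus (TS) applied to the same Cartesian squares, whose hypotheses hold because $\Ot_{i}\to X$ is a smooth $P_i/B\cong\mathbb{P}^1$-bundle and $\Ot_i\hookrightarrow X\times_kX$ is a regular embedding of smooth varieties (Lemma~\ref{orbitasprod}) --- but your packaging makes explicit a multiplicativity of the classes $[\Ot_{I_w}]_H$ under convolution that the paper leaves implicit, at the cost of having to justify the correspondence-composition formula (that the convolution of two pushforward classes is the pushforward of the class of their transversal fibre product), which you correctly reduce to the three Cartesian squares you list. One minor caveat: with the paper's conventions $f_a(x)=(p_2)_*(p_1^*(x)\cdot a)$ and $pr=(pr_1,pr_{l+1})$, your recursion yields the operators composed in the order with $q_{i_1}^*q_{i_1*}$ applied first; the paper's displayed order has $q_{i_l}^*q_{i_l*}$ applied first. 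This is a convention-level discrepancy already present in the paper's own reduction to $pr_{1*}\circ pr_{l+1}^*$, and it is immaterial for Corollary~\ref{thm:endpres}, which only uses the subring generated by all the $q_i^*q_{i*}$.
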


\begin{proof}
By definition the image of $[\Ot_{I_w}]_H$ is the $\hh_H(k)$-linear operator
\[
\hh_H^\bullet(X)\stackrel{p_1^*}\longrightarrow \hh_H(X\times_k X)\stackrel{\cdot[\Ot_{I_w}]}\longrightarrow \hh_H(X\times_k X)\stackrel{p_{2*}}\longrightarrow \hh_H^{\bullet-l(w)}(X).
\]
By the projection formula and (TS) it can be also written as
\[
\hh_H^\bullet(X)\stackrel{pr_{l+1}^*}\longrightarrow\hh_H(\Ot_{I_w})\stackrel{pr_1*}\longrightarrow\hh_H^{\bullet-l(w)}(X),
\]
where $pr_j$ denotes the projection on the $j$-th coordinate (recall that $p_j$ denotes the projection obtained by removing the $j$-th coordinate).

By the property (TS) we obtain a commutative diagram
\[
\xymatrix{
\hh_H(X)\ar[r]^{pr_{2}^*} \ar[d]_{q_{i_l*}} & 
\hh_H(\Ot_{i_{l}}) \ar[d]_{pr_{1*}} \ar[r]^-{pr_{23}^*} & 
\hh_H(\Ot_{(i_{l-1},i_l)}) \ar[d]_{pr_{12*}} \ar[r]^-{pr_{234}^*} &
\ldots \ar[r] &
\hh_H(\Ot_{I_w}) \ar[d] \\
\hh_H(G/P_{i_l}) \ar[r]^{q_{i_l}^*} & 
\hh_H(X)\ar[d]_{q_{i_{l-1}*}} \ar[r]^{pr_{2}^*} & 
\hh_H(\Ot_{i_{l-1}}) \ar[d]_{pr_{1*}} & 
& \ldots \ar[d] \\
& 
\hh_H(G/P_{i_{l-1}}) \ar[r]^{q_{i_{l-1}}^*} & 
\hh_H(X) \ar[d]_{q_{i_{l-2}*}} & 
&
\ldots \ar[d] \\
& 
&
\ldots \ar[r] &
\ldots \ar[r] & 
\hh_H(X)
}
\]
where $pr_{ijk\ldots}$ denote the projection on the $i$-th, $j$-th, $k$-th, $\ldots$, coordinates. The result then follows since the top horizontal row gives $pr_{l+1}^*$ and the right vertical column gives $pr_{1*}$. 
\end{proof}
Observe that the theorem can not be stated for $H=G$ as $X$ is not a $G$-equivariant cellular space so we can not use the K\"unneth isomorphism of Lemma~\ref{lem:eqkun}.

Combining Diagram \eqref{diag:comconst} and Theorem~\ref{lem:imkun} we obtain

\begin{cor}\label{thm:endpres} 
There is a commutative diagram of convolution rings
\[
\xymatrix{
\hh_{B^2}(G)\ar@{^{(}->}[r]^-{\bar\pi_1^*}  & \hh_{\Delta(B)\times B^2}(G^2) \ar[r]^-\simeq  \ar[d]_{q^*}& \hh_B(X^2) \ar[r]^-\simeq \ar[d] & End_{\Ss}(\hh_B(X)) \ar[d] \\
 & \hh_{B^2}(G^2) \ar[r]^-\simeq & \hh(X^2) \ar[r]^-\simeq & End_{\Rr}(\hh(X))
}
\]
where the image of $(\hh_{B^2}(G),\circ)$ in $End_{\Ss}(\hh_B(X))$ is the subring generated by the push-pull operators 
$q_i^*q_{i_*}$ (of degree $(-1)$) and the image of the forgetful map $\Ss=\hh_G^\bullet(X)\to\hh_B^{\bullet}(X)$ (of degrees '$\bullet$') and the last vertical arrow is induced by the augmentation map $\Ss \to \Rr=\hh(k)$.
\end{cor}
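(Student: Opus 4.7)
The plan is to construct the diagram by gluing the already-commutative square \eqref{diag:comconst} (specialized to $H=B$) on the left with the K\"unneth identifications of Lemmas~\ref{cor1} and~\ref{lem:eqkun} on the right, and then to extract the description of the image from Theorem~\ref{lem:imkun}. The entire content of the corollary is a packaging of these three inputs.

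First I would specialize \eqref{diag:comconst} to $H=B$. The injectivity of $\bar\pi_1^*$ with respect to the convolution products is Lemma~\ref{lem:convprod}, the vertical identifications $\hh_{\Delta(B)\times B^2}(G^2)\simeq\hh_B(X^2)$ and $\hh_{B^2}(G^2)\simeq\hh(X^2)$ follow from the property (Tor) applied to the free right $B$-actions on $G^2$, and the resulting square is compatible with the convolution and correspondence products by construction. Next I would attach the rightmost column by applying Lemma~\ref{lem:eqkun} to $X=G/B$ viewed as a $B$-equivariant cellular space (where (CD) holds by Example~\ref{eqcellular}) and Lemma~\ref{cor1} to $X$ as a non-equivariant cellular space, obtaining the ring isomorphisms $(\hh_B(X^2),\circ)\simeq\End_\Ss(\hh_B(X))$ and $(\hh(X^2),\circ)\simeq\End_\Rr(\hh(X))$. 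Naturality of these K\"unneth maps in the equivariant group converts the forgetful map on the left into the map on endomorphism rings induced by the augmentation $\Ss\to\Rr$, giving both the last vertical arrow and the commutativity of the rightmost square.

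To identify the image of $\hh_{B^2}(G)\simeq\hh_G(X^2)$ inside $\End_\Ss(\hh_B(X))$, I would use the $G$-equivariant cellular structure on $X^2$ coming from the Bruhat orbit closures $\Ob_w$ together with the resolutions $\Ot_{I_w}$ of \eqref{eq:ressing}. By (CD), $\hh_G(X^2)$ is free over $\Ss=\hh_G(X)$ with basis $\{[\Ot_{I_w}]_G\}_{w\in W}$; the forgetful map sends this basis to $\{[\Ot_{I_w}]_B\}_{w\in W}\subset\hh_B(X^2)$, with scalars restricted along the characteristic map $\Ss\to\hh_B(X)$. By Theorem~\ref{lem:imkun}, each $[\Ot_{I_w}]_B$ is carried by the K\"unneth isomorphism to the operator $q_{i_1}^*q_{i_1*}\circ\cdots\circ q_{i_l}^*q_{i_l*}$ for the chosen reduced decomposition $w=s_{i_1}\cdots s_{i_l}$. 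The image is therefore an $\Ss$-subring of rank $|W|$ spanned by all such compositions; since it obviously also contains the image of $\Ss$ acting by scalar multiplication (coming from elements of the form $s\cdot[\Delta(X)]$ in $\hh_G(X^2)$) and the elementary operators $q_i^*q_{i*}$ (the $w=s_i$ case), it coincides with the subring generated by these.

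The only point requiring real attention is the compatibility of the convolution product on $\hh_{B^2}(G)$ with endomorphism composition throughout the entire chain of identifications; this compatibility is ultimately the ring-isomorphism content of Lemmas~\ref{lem:convprod},~\ref{lem:eqkun} and~\ref{cor1}, but translating between the convolution product (built from pull-backs along the $\bar\pi_j$) and the correspondence product (built from the projections $p_j$) through \eqref{diag:comconst} is the step where one must be most careful.
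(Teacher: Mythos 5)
Your proposal is correct and follows the paper's argument exactly: the corollary is obtained by specializing diagram \eqref{diag:comconst} to $H=B$, applying the K\"unneth isomorphisms of Lemmas~\ref{cor1} and~\ref{lem:eqkun} to the Bruhat cellular structure on $X^2$, and reading off the image from the basis $\{[\Ot_{I_w}]_G\}$ of $\hh_G(X^2)$ over $\Ss=\hh_G(X)$ via Theorem~\ref{lem:imkun}. Nothing essential differs from the paper's proof.
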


\section{Self-duality of the algebra of push-pull operators}\label{sec:affdem}

In the present section
we identify the convolution ring $\hh_{B^2}(G)$ with the formal affine Demazure algebra $\DF$ of \cite{HMSZ} and show that it is self-dual with respect to the convolution product (Theorem~\ref{thm:Demazure}). 
Our arguments are based on the results of~\cite{HMSZ}, \cite{CZZ}, \cite{CZZ1} and, especially, \cite{CZZ2}. We use the notation of \cite{CZZ2}.

\medskip

Recall that algebraic oriented cohomology theories $\hh$ correspond (up to universality) to one-dimensional commutative formal group laws $F(u,v)$: the formal group law corresponds to $\hh$ by means of the Quillen formula expressing the first characteristic classes 
\[
c_1^\hh(\mathcal{L}_1\otimes \mathcal{L}_2)=F(c_1^\hh(\mathcal{L}_1),c_1^\hh(\mathcal{L}_2))
\]
and
the respective cohomology theory $\hh$ is defined from $F$ by tensoring with the algebraic cobordism 
\[
\hh(-)=\Omega(-)\otimes_{\Omega(k)}\Rr,
\]
where $\Omega(k)\to \Rr$ defines $F$ by specializing the coefficients in the Lazard ring
(see \cite[\S2]{CZZ2} for details).
For example, the additive formal group law correspond to Chow groups and the periodic multiplicative law corresponds to $K$-theory.

By~\cite[Thm.~3.3]{CZZ2} the completed $B$-equivariant coefficient ring $\Ss=\hh_B(k)$ can be identified with the formal group algebra $\Rr[[T^*]]_F$, where $T^*$ is the group of characters of a split maximal torus $T\subset B$ and $F$ is the respective formal group law.

Following \cite[\S5]{CZZ2} (we assume that $\Ss$ satisfies regularity condition \cite[5.1]{CZZ2}) consider the localized algebra $\Qq=\Rr[[T^*]]_F[\frac{1}{x_{\alpha}}]_{\alpha}$ (where $\alpha$ runs through all simple roots) and the smash  products $\Qq_W=\Qq\otimes_\Rr \Rr[W]$ and $\Ss_W=\Ss\otimes_\Rr \Rr[W]$ with the multiplication given by 
\[
q\delta_w\cdot q'\delta_{w'}=q(wq')\delta_{ww'}\] for $q,q'\in \Qq$ (respectively $\Ss$) and $w,w'\in W$ (the Weyl group). 
Consider the duals $\Qq_W^\star=Hom_\Qq(\Qq_W,\Qq)$ and $\Ss_W^\star=Hom_\Ss(\Ss_W,\Ss)$.
By definition $\Qq_W^\star$ and $\Ss_W^\star$ can be identified with the ring of functions
$Hom(W,\Qq)$ and $Hom(W,\Ss)$ respectively

As in \cite[Def.~6.2, 6.3]{HMSZ}
for each simple root $\alpha_i$ of the root system for $G$
define the push-pull element \[
Y_i=(1+\delta_i)\frac{1}{x_{-i}}\in \Qq_W.
\]
Define the formal affine Demazure algebra $\DF$ as the subalgebra of $\Qq_W$ generated by multiplications by $\Ss$ and the elements $Y_i$.

By  \cite[Thm.~7.9]{CZZ} (see also \cite[Thm.~5.14]{HMSZ}) the $\Rr$-algebra $\DF$ satisfies the following (complete) set of relations: for $i,j=1\ldots rk(G)$ and $u\in \Ss$
\begin{itemize}
\item $Y_i^2=\kappa_i Y_i$, where $\kappa_i=\tfrac{1}{x_i}+\tfrac{1}{x_{-i}}$ and $x_i=x_{\alpha_i}$,
\item $Y_iu=s_i(u)Y_i+\Delta_{-i}(u)$, where $\Delta_{-i}(u)=\tfrac{u-s_i(u)}{x_{-i}}$, 
\item $(Y_iY_j)^{m_{ij}}-(Y_jY_i)^{m_{ij}}=\sum_{I_w} c_{I_w} Y_{I_w}$, where the sum
is taken over all reduced expressions $I_w$ of elements $w$ of the subgroup $\langle s_i,s_j\rangle \subseteq W$, and the coefficients $c_{I_w}$ are given by the formulas of \cite[Prop.~5.8]{HMSZ}

\end{itemize}

\begin{ex} If $F$ corresponds to Chow groups, then $\DF=\mathbf{H}_{nil}$ is the affine nil-Hecke algebra over $\Z$ in the notation of \cite{Gi}.
If $F$ corresponds to $K$-theory, then $\DF$ is the 0-affine Hecke algebra over $\Z$ ($q\to 0$ in the affine Hecke algebra).
If $F$ corresponds to the generic hyperbolic formal group law of \cite[\S9]{CZZ1}, then by \cite[Prop.~9.2]{CZZ1} the constant part of $\DF$ is isomorphic
to the localized classical Iwahori-Hecke algebra.
\end{ex}

Let $\DF^\star=Hom_{\Ss}(\DF,\Ss)$ denote its dual.
Observe that the main result of \cite{CZZ2} (Thm.~8.2 loc.cit.) says that $\DF^\star$ is isomorphic to the $\Rr$-algebra $\hh_{T}(X)$.
We then obtain the following generalization of \cite[Prop.~12.8]{Gi}

\begin{thm}\label{thm:Demazure} Let $G$ be a split semisimple linear algebraic group over a field $k$ and let $\hh$ be an equivariant theory
that satisfies property (CD).

Then the convolution algebra $(\hh_{B^2}(G),\circ)$ is isomorphic (as an $\Rr$-algebra) to the formal affine Demazure algebra $\DF$. So
there is an $\Rr$-algebra isomorphism
\[
(\DF^\star,\circ) \simeq (\DF,\cdot)
\]
\end{thm}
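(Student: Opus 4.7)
The plan is to combine Corollary~\ref{thm:endpres}, which identifies $(\hh_{B^2}(G),\circ)$ with the subring of $\End_{\Ss}(\hh_B(X))$ (for $X=G/B$) generated by multiplication by $\Ss$ and the push-pull operators $q_i^*q_{i*}$, with the main theorem \cite[Thm.~8.2]{CZZ2}, which realizes $\hh_T(X) \simeq \DF^\star$ and, on the Demazure side, exhibits the natural action of $\DF$ on $\DF^\star$ through the same collection of generators. Matching these two descriptions inside $\End_\Ss(\DF^\star)$ will yield the claimed isomorphism, after which the self-dual consequence will follow by a separate application of (Tor).

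First I would replace $B$ by $T$: since $B/T$ is unipotent, axioms (HI) and (Tor) furnish a canonical $\Ss$-algebra isomorphism $\hh_B(X) \simeq \hh_T(X)$ that is compatible with pull-backs and push-forwards along the $\mathbb{P}^1$-fibrations $q_i\colon X \to G/P_i$. Composing with \cite[Thm.~8.2]{CZZ2} gives an $\Ss$-linear isomorphism $\hh_B(X) \simeq \DF^\star$ under which the $\Ss$-module structure corresponds to the natural action of $\Ss \subset \DF$ on $\DF^\star$.

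The next step is to match the push-pull operator $q_i^*q_{i*}$ on $\hh_B(X)$ with the action of the Demazure element $Y_i=(1+\delta_i)\tfrac{1}{x_{-i}} \in \DF$ on $\DF^\star$. This identification is essentially the content of \cite[\S8]{CZZ2}: $q_i^*q_{i*}$ factors through $\hh_B(G/P_i) \simeq (\DF^\star)^{s_i}$ and is computed, via the formal group law of the tangent line along the fiber, exactly by $Y_i$. Combined with Corollary~\ref{thm:endpres} and the fact that $\DF \subset \Qq_W$ is generated as an $\Rr$-subalgebra by $\Ss$ and the $Y_i$, this shows that the image of $\hh_{B^2}(G)$ in $\End_\Ss(\DF^\star)$ coincides with the image of the natural $\Rr$-algebra map $c\colon \DF \to \End_{\Ss}(\DF^\star)$.

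It then remains to show $c$ is injective, which I expect to be the main obstacle. By \cite[Thm.~7.9]{CZZ}, $\DF$ is a free $\Ss$-module of rank $|W|$ with basis $\{Y_{I_w}\}_{w\in W}$, so the evaluation pairing $\DF \otimes_\Ss \DF^\star \to \Ss$ is perfect. If $d \in \DF$ acts as zero on $\DF^\star$, then in particular $(c(d)\cdot f)(1)=f(d)=0$ for every $f \in \DF^\star$, forcing $d=0$. Combined with the injectivity in Corollary~\ref{thm:endpres}, this yields $(\hh_{B^2}(G),\circ) \simeq \DF$ as $\Rr$-algebras. For the self-dual consequence, (Tor) applied to the diagonal copy $\Delta(B) \subset B^2$ acting on $G$ gives an $\Ss$-module identification $\hh_{B^2}(G) \simeq \hh_B(X) \simeq \DF^\star$; transporting the convolution product along this module isomorphism produces the advertised $\Rr$-algebra isomorphism $(\DF^\star,\circ) \simeq (\DF,\cdot)$.
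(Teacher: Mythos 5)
Your proposal is correct and follows essentially the same route as the paper's proof: both reduce via Corollary~\ref{thm:endpres} and the isomorphism $\hh_B(X)\simeq\hh_T(X)$, invoke \cite[\S8]{CZZ2} to identify $q_i^*q_{i*}$ with the right action of $Y_i$ on the dual, and identify multiplication by $\hh_G(X)$ with right multiplication by $\Ss$. Your injectivity step via the evaluation pairing on the free $\Ss$-module $\DF$ is the same argument the paper runs with the inclusion $\Qq_W\hookrightarrow\End_\Qq(\Qq_W^\star)$, so the two proofs differ only cosmetically.
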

\begin{proof}
By Corollary~\ref{thm:endpres} the ring $(\hh_{B^2}(G),\circ)\simeq (\hh_{B}(X),\circ)$ is isomorphic to the subalgebra of $\End_\Ss(\hh_B(X))$ generated by the image of the forgetful map $\hh_G(X)\to\hh_B(X)$ and push-pull operators $q_i^*q_{i*}$. Since the map $B\to B/T$ is an affine fibration, the natural map $\hh_B(X)\to\hh_T(X)$ is an isomorphism. Hence we may identify $\Ss$ with $\hh_T(k)$ and $\End_\Ss(\hh_B(X))$ with $\End_\Ss(\hh_T(X))$. Observe that these identifications preserve push-pull operators. The inclusion of $T$-fixed point set $W\to X$ gives an embedding $\hh_T(X)\to\hh_T(W)=\Ss_W^\star\subseteq \Qq_W^\star$. By~\cite[Corollary 8.7]{CZZ2} there is the following commutative diagram
\begin{equation}\label{eq:mdiag}
\xymatrix{
\hh_T(X)\ar[r]\ar[d]_{q_i^*q_{i*}} & \Ss_W^\star\ar@{^{(}->}[r] & \Qq_W^\star\ar[d]^{A_i}\\
\hh_T(X)\ar[r] & \Ss_W^\star\ar@{^{(}->}[r] & \Qq_W^\star
}
\end{equation}
where the Hecke operator $A_i$ is given by \[
A_i(f)(x)=f(x\cdot Y_i)\quad for\; x\in \Qq_W, f\in \Qq_W^\star.
\] 
Moreover, the forgetful map
\[\Ss\cong\hh_G(X)\to\hh_T(X)=\oplus_{w\in W}\Ss\]
is given by the formula $s\mapsto (w\cdot s)_{w\in W}$ for any $s\in\Ss$.
Then the multiplication in $\hh_T(X)=\Ss_W^*$ by the image of any element in $s\in\hh_G(X)$ induces a right multiplication by $s$ in $\Qq_W^*$.
Since $\Qq_W$ is a free $\Qq$-module of finite rank, the natural map $\imath\colon \Qq_W\to \End_\Qq(\Qq_W^\star)$ given by $\imath(x)(f)(y)=f(yx)$ is an inclusion. Note that every $A_i$ lies in the image of~$\imath$. Then by diagram \eqref{eq:mdiag} the image of $\hh_{B^2}(G)$ is isomorphic to a subalgebra of $\Qq_W$ generated by $\Ss$ and $Y_i$ which is $\DF$.
\end{proof}

\section{The rational algebra of push-pull operators} \label{sec:ratalg}

In the present section we introduce the rational algebra of push-pull operators $\DFR $ (Definition~\ref{def:ratalg}) and show that
it can be identified with the subring of rational endomorphisms of $G/B$ (Theorem~\ref{cor:surmain}).

\medskip

The $B^2$-equivariant isomorphism $E\times_k G\to E\times_k E$, $(e,g)\mapsto(e,eg)$ induces an isomorphism
$E\times^B G/B\to E/B\times_k E/B$. For all $w\in W$ fix a reduced decomposition $I_w=(i_1,\ldots,i_l)$ and the corresponding Bott-Samelson resolution
$X_{I_w}\to G/B$ of the Schubert cell. This map is $B$-equivariant, so it descends to a map
$Y_{I_w}=E\times^BX_{I_w}\to E\times^B G/B$.

\begin{lem}\label{lem:prebott}
The classes $[Y_{I_w}]$ form a basis of $\hh(E/B\times_k E/B)$ over $\hh(E/B)$, where the module structure is given by the pullback of the projection $pr_1^*\colon\hh(E/B)\to\hh(E/B\times_k E/B)$.
\end{lem}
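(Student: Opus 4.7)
The plan is to realize $E/B \times_k E/B$ as a cellular space over $S := E/B$ in the sense of Definition~\ref{dfn:cell} and then invoke the (CD) property of Definition~\ref{dfn:CD} directly.

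First I would transport everything through the isomorphism $E \times^B G/B \cong E/B \times_k E/B$ induced by the $B^2$-equivariant map $(e,g) \mapsto (e,eg)$. Under this identification the projection $pr_1 \colon E/B \times_k E/B \to E/B$ corresponds to the natural structure map $E \times^B G/B \to E/B$, which is a Zariski-locally trivial fibration with fiber $G/B$ (the local triviality uses that $B$ is a special group, so the torsor $E \to E/B$ is Zariski-locally trivial).

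Second, I would lift the Bruhat filtration $\emptyset \subset \bar{X}_{e} \subset \ldots \subset \bar{X}_{w_0} = G/B$ by Schubert varieties, which is $B$-equivariant, to a filtration of $E \times^B G/B$ by closed subvarieties $E \times^B \bar{X}_w$ over $E/B$. Each stratum $E \times^B X_w$ is a torsor of a vector bundle of rank $l(w)$ over $E/B$: as recalled at the start of Section~\ref{sec:affdem}, the Schubert cell $X_w \cong \prod_{\alpha \in \Phi^+ \cap w\Phi^-} U_\alpha$ is an affine $B$-variety on which $T$ acts through the characters $\alpha$ (and the unipotent radical by translations), so its $E$-twist is an affine bundle over $E/B$. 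Moreover, $Y_{I_w} = E \times^B X_{I_w}$ is a resolution of singularities of $E \times^B \bar{X}_w$ over $E/B$, obtained by $E$-twisting the $B$-equivariant Bott--Samelson resolution $X_{I_w} \to \bar{X}_w$.

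This exhibits $E/B \times_k E/B$ as a cellular space over $S = E/B$ in the sense of Definition~\ref{dfn:cell}, with the $Y_{I_w}$ playing the role of the $\widetilde{X}_{ij}$. The (CD) property of Definition~\ref{dfn:CD} then yields that the classes $[Y_{I_w}] = (Y_{I_w} \to E/B \times_k E/B)_*(1_{Y_{I_w}})$ form an $\hh(E/B)$-basis of $\hh(E/B \times_k E/B)$, where the $\hh(E/B)$-module structure is exactly via $pr_1^*$. The main technical point requiring care is the Zariski-local triviality of each twisted cell $E \times^B X_w \to E/B$; this reduces to Zariski-local triviality of the $B$-torsor $E \to E/B$, which holds because $B$ is a semidirect product of a split torus by a split unipotent group and hence special.
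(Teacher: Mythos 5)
Your proposal is correct, and its core idea coincides with the paper's: use the specialness of $B$ to see that $pr_1\colon E/B\times_k E/B\cong E\times^B G/B\to E/B$ is a cellular fibration with twisted Bruhat cells $E\times^B X_w$ and twisted Bott--Samelson resolutions $Y_{I_w}=E\times^B X_{I_w}$. The difference is in how the basis is identified. You verify the axioms of Definition~\ref{dfn:cell} directly over $S=E/B$ (local triviality of each stratum from Zariski-local triviality of the $B$-torsor $E\to E/B$, affine transition functions as in the discussion of $\OO_w\to X$ before Lemma~\ref{orbitasprod}) and then invoke (CD), which immediately declares the classes $[Y_{I_w}]$ to be the distinguished basis. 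The paper instead cites the cellular-fibration lemma of \cite[Lemma~3.3]{PSZ} to get freeness together with the fact that the restriction $j^*$ to the generic fibre $\xi\times_k E/B\simeq \xi\times_k G/B$ is surjective and detects bases, and then computes $j^*([Y_{I_w}])=[\xi\times X_{I_w}]$ using that $p^{-1}(\xi)\to\xi$ is a trivial $B$-torsor. The generic-point route has the advantage of not requiring you to check that the $Y_{I_w}$ are literally the resolutions entering the (CD) basis (only that they restrict to a basis over $\xi$); your route has the advantage of staying entirely inside the paper's own (CD) formalism rather than importing an external cellular-fibration result. Both are valid under the standing hypothesis that $\hh$ satisfies (CD).
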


\begin{proof}
Since $B$ is special, $G$-torsor $E$ splits over the function field of $E/B$. 
Then by \cite[Lemma~3.3]{PSZ} projection $pr_1\colon E/B\times_k E/B\to E/B$ is a cellular fibration in the sense of \cite[Definition 3.1]{PSZ} so that $(E/B)^2$ is a cellular space over $E/B$.
Let $\xi$ be the generic point of $E/B$.
The pullback of an open embedding \[j^*\colon\hh(E/B\times_k E/B)\to\hh(\xi\times_k E/B)\simeq\hh(G/B)\] is surjective and any preimage of $\Rr$-basis of $\hh(G/B)$ gives a basis of $\hh(E/B\times_k E/B)$. Thus it is sufficient to check that $j^*$ sends $[Y_{I_w}]$ to a basis of $\hh(\xi\times_k E/B)$. Let $p\colon E\to E/B$ be the projection. Note that \[E\times^BX_{I_w}\times_{(E/B\times_k E/B)}\xi\times_k E/B=p^{-1}(\xi)\times^BX_{I_w}=\xi\times X_{I_w},\] since $p^{-1}(\xi)\to\xi$ is a trivial $B$-torsor. Thus $j^*([Y_{I_w}])=[\xi\times X_{i_w}]$ forms a basis of $\hh(\xi\times E/B)=\hh(\xi\times G/B)$ over $\hh(\xi)=\Rr$.
\end{proof}
Consider a $B$-equivariant map \[
f\colon E\times^B G\to B\backslash G,\quad (e,g)B\mapsto Bg.
\] Let $X'_{I_w}=(P_{i_1}\times\ldots\times P_{i_l})/B^l$ where $B^l$-action on $P_{i_1}\times\ldots\times P_{i_l}$ is given by $(p_1,\ldots, p_l)\cdot(b_1,\ldots, b_l)=(b_1^{-1}p_1b_2,\ldots,b_l^{-1}p_l)$. Then $X'_{I_w}$ gives the Bott-Samelson class for $B\backslash G$.
\begin{lem}\label{lem:bott}
The composition $\hh_B(B\backslash G)\stackrel{f^*}\to\hh_B(E\times^B G)\simeq\hh(E/B\times_k E/B)$ maps $[X'_{I_w}]_B$ to $[Y_{I_w}]$.
\end{lem}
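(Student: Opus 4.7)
The plan is to compute $f^*[X'_{I_w}]_B$ via transversal base change in the Cartesian square obtained by pulling $g'\colon X'_{I_w}\to B\backslash G$ back along $f$, and then to identify the resulting $B$-space, after quotienting by the residual right $B$-action, with the twisted Bott--Samelson $E\times^B X_{I_w}$.

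First I would observe that the map $f\colon E\times^B G\to B\backslash G$ is a fibration with fibre $E$ (the equivalence $[eb,g]=[e,bg]$ identifies the fibre over $Bg_0$ with $E$), hence is smooth and in particular l.c.i., while $g'$ is projective as a Bott--Samelson map. So (TS) applies to
\[
\xymatrix{
Y \ar[r]^{\tilde f}\ar[d]_{\tilde g} & X'_{I_w} \ar[d]^{g'} \\
E\times^B G \ar[r]^{f} & B\backslash G
}
\]
with $Y=X'_{I_w}\times_{B\backslash G}(E\times^B G)$, giving $f^*[X'_{I_w}]_B=f^*g'_*(1)=\tilde g_*(1)\in\hh_B(E\times^B G)$.

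Next I would describe $Y$ explicitly. A point of $Y$ is a pair $([p_1,\dots,p_l]_{B^l},[e,g]_B)$ with $Bg=Bp_1\cdots p_l$; using the $B$-equivalence on $E\times^B G$ one may always normalize $g=p_1\cdots p_l$. A short group-theoretic bookkeeping then shows $Y$ is the quotient of $E\times P_{i_1}\times\cdots\times P_{i_l}$ by the $B^l$-action
\[
(e,p_1,\dots,p_l)\cdot(b_1,\dots,b_l)=(eb_1,\,b_1^{-1}p_1b_2,\,b_2^{-1}p_2b_3,\dots,b_{l-1}^{-1}p_{l-1}b_l,\,b_l^{-1}p_l).
\]
The residual right $B$-action on $Y$ coming from $[e,g]\cdot b=[e,gb]$ (equivalently $[p_1,\dots,p_l]\cdot b=[p_1,\dots,p_lb]$) translates under this description to $(e,p_1,\dots,p_l)\cdot b=(e,p_1,\dots,p_lb)$. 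Adding this to the $B^l$-relation yields exactly the $B^{l+1}$-equivalence defining $E\times^B X_{I_w}$, so $Y/B\simeq E\times^B X_{I_w}$.

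Finally, by (Tor) the isomorphism $\hh_B(E\times^B G)\simeq\hh(E/B\times_k E/B)$ sends $\tilde g_*(1)$ to $(\tilde g/B)_*(1_{Y/B})$. Under the Borel-reduction isomorphism $(E\times^B G)/B\simeq E/B\times_k E/B$, $(e,gB)\mapsto(eB,egB)$, the induced map $\tilde g/B\colon E\times^B X_{I_w}\to E/B\times_k E/B$ is, by construction, the map whose pushforward of $1$ defines $[Y_{I_w}]$. Composing the identifications gives $f^*[X'_{I_w}]_B\mapsto[Y_{I_w}]$, as required.

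The main obstacle I anticipate is the combinatorial matching of the two equivalence relations on $E\times P_{i_1}\times\cdots\times P_{i_l}$ — the one coming from the fibre product and the one coming from the two-step quotient $E\times^B X_{I_w}$; one must line up the $B^l$-parameter with the $B\times B^l$-parameter correctly (in particular realizing the diagonal $B$ in $E\times^B(\cdot)$ via the first parameter $b_1$ and the free right $B$-action via an extra parameter on $p_l$). Once this bookkeeping is done, everything follows formally from (TS) and (Tor).
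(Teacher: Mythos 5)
Your proposal is correct and follows essentially the same route as the paper: the fiber product $(E\times^B G)\times_{B\backslash G}X'_{I_w}$ is identified with the iterated contracted product $E\times^B P_{i_1}\times^B\cdots\times^B P_{i_l}$ (the paper's $M_{I_w}$), base change (TS) gives $f^*[X'_{I_w}]_B=[M_{I_w}]_B$, and descending along the residual $B$-torsor via (Tor) identifies this class with $[Y_{I_w}]$. Your write-up just makes the $B^l$- versus $B^{l+1}$-bookkeeping more explicit than the paper does.
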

\begin{proof}
Consider the map $P_{i_1}\times^BP_{i_2}\times^B\ldots\times^B P_{i_l}\to G$ given by $(p_1,\ldots, p_l)\to p_1\ldots p_l$. It is $B$-equivariant with respect to the left multiplication, so it descends to a map $M_{I_w}=E\times^BP_{i_1}\times^BP_{i_2}\times^B\ldots\times^B P_{i_l}\to E\times^B G$. By construction we have an isomorphism 
\[M_{I_w}\simeq Y_{I_w}\times_{E\times^B(G/B)}(E\times^B G).\] Then $[M_{I_w}]_B$ is mapped to $[Y_{I_w}]$ via the isomorphism $\hh(E\times^B G/B)\to\hh_B(E\times^B G)$. Thus it is sufficient to check that $f^*[X'_{I_w}]_B=[M_{I_w}]_B$, which follows from the fact that 
\[M_{I_w}=E\times^B(P_{i_1}\times\ldots\times P_{i_l}/B^{l-1})\simeq (E\times^B G)\times_{B\backslash G}X'_{I_w}.\qedhere\]
\end{proof}

\begin{lem}(cf. \cite[Corollary~3.4]{PSZ})\label{lem:surjc} The composition
\[
(p_1^*,\gamma_1^*)\colon  \hh_B(E) \otimes_{\Ss} \hh_{B^2}(G) \longrightarrow  \hh_{B^2}(E^2)\simeq \hh((E/B)^2) 
\] of the diagram~\eqref{diag:basech} (for $H=B$) is an isomorphism. 
\end{lem}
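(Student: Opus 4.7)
The plan is to show that both sides are free modules over $\hh_B(E)\simeq \hh(E/B)$ of the same rank, with compatible bases, so that the map identifies basis with basis.

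First, I would use the property (Tor) to rewrite the source: $\hh_B(E)\simeq \hh(E/B)$ (since $E\to E/B$ is a $B$-torsor) and $\hh_{B^2}(G)\simeq \hh_B(B\backslash G)$ (quotienting by the first factor). The variety $B\backslash G$ is (isomorphic to) a twisted form of $G/B$; in particular it is a cellular $B$-variety whose filtration is given by closures of Bruhat cells, with Bott--Samelson style $B$-equivariant resolutions $X'_{I_w}\to B\backslash G$ for each reduced expression $I_w$. By the equivariant property (CD) (verified in Example~\ref{eqcellular}), the classes $[X'_{I_w}]_B$ form an $\Ss$-basis of $\hh_B(B\backslash G)\simeq \hh_{B^2}(G)$. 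Hence
\[
\hh_B(E)\otimes_{\Ss}\hh_{B^2}(G)
\]
is a free $\hh(E/B)$-module with basis $\{1\otimes [X'_{I_w}]_B\}_{w\in W}$.

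Next I would identify the target. By Lemma~\ref{lem:prebott}, $\hh((E/B)^2)$ is a free $\hh(E/B)$-module (via $pr_1^*$) with basis $\{[Y_{I_w}]\}_{w\in W}$, where $Y_{I_w}=E\times^B X_{I_w}$. It therefore suffices to show that the map $(p_1^*,\gamma_1^*)$ sends $1\otimes[X'_{I_w}]_B$ to $[Y_{I_w}]$ and respects the $\hh(E/B)$-module structure. The module compatibility is immediate: for $a\in \hh_B(E)$ we have $(p_1^*,\gamma_1^*)(a\otimes 1)=p_1^*(a)\cdot \gamma_1^*(1)=p_1^*(a)$, which under the identification $\hh_{B^2}(E^2)\simeq \hh((E/B)^2)$ is precisely $pr_1^*(a)$. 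For the basis elements, the point is that $\gamma_1^*$ on $\hh_{B^2}(G)\simeq \hh_B(B\backslash G)$ coincides (under the (Tor) identifications) with the pullback $f^*\colon \hh_B(B\backslash G)\to \hh_B(E\times^B G)\simeq \hh((E/B)^2)$ of Lemma~\ref{lem:bott}, because both are obtained from the $B^2$-equivariant map sending $(e,eg)\mapsto Bg$, which descends along the appropriate quotients to the same morphism of stacks. Hence by Lemma~\ref{lem:bott}, $\gamma_1^*([X'_{I_w}]_B)=[Y_{I_w}]$.

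Putting these steps together, the $\hh(E/B)$-linear map sends a basis to a basis and is therefore an isomorphism. The main technical point I expect to need care with is step two: verifying that under the two applications of (Tor) (first to the equivariant cohomology of $G$, then to that of $E^2$) the pullback $\gamma_1^*$ is indeed intertwined with the pullback $f^*$ of the map $E\times^B G\to B\backslash G$ used in Lemma~\ref{lem:bott}; this amounts to unwinding the $B^2$-equivariant isomorphism $E\times_k G\simeq E\times_k E$, $(e,g)\mapsto (e,eg)$, and the compatibility of (Tor) with equivariant pullbacks. Once this identification of pullbacks is nailed down, the rest of the argument is formal.
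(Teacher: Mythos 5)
Your proposal is correct and follows essentially the same route as the paper: the paper's proof also takes the $\Ss$-basis of $\hh_{B^2}(G)$ given by Bott--Samelson classes and invokes Lemma~\ref{lem:bott} (together with Lemma~\ref{lem:prebott}) to see that $\gamma_1^*$ carries it to the $\hh_B(E)$-basis $\{[Y_{I_w}]\}$ of $\hh((E/B)^2)$. The identification of $\gamma_1^*$ with $f^*$ under the (Tor) isomorphisms, which you rightly flag as the technical point, is exactly what the paper leaves implicit when it applies Lemma~\ref{lem:bott}.
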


\begin{proof} 
Consider the basis of $\hh_{B^2}(G)$ over $\Ss$ given by the classes of Bott-Samelson resolutions $\zeta_{I_w}$.
Then by Lemma~\ref{lem:bott} $\gamma_1^*(\zeta_{I_w})$ forms a basis of $\hh_{B^2}(E^2)$ over $\hh_B(E)$ induced by the respective cellular filtration.
\end{proof}

Consider the restriction map $\hh(E/B)\to \hh(E_{\kb}/B)=\hh(X_{\kb})$ on cohomology induced by the scalar extension $\kb/k$ (here $\kb$ is a splitting field of $E$).
Let $\hhr(X)$ denote its image. 

\begin{cor}\label{thm:rational}
The image of the ring homomorphism
\[
res_{\kb/k}\colon (\hh(E/B\times_k E/B),\circ) \longrightarrow (\hh(X_{\kb}\times_{\kb} X_{\kb}),\circ).
\]
is the subalgebra generated by the multiplication by the elements of $\hhr(X)$ and the push-pull operators 
$q_i^*q_{i*}\colon\hh(X)\to\hh(G/P_i)\to\hh(X)$ for all simple roots $\alpha_i$. 
\end{cor}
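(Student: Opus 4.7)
The plan is to combine Lemma~\ref{lem:surjc} with the split-case result Corollary~\ref{thm:endpres} applied to $G_{\kb}$, bridged by the K\"unneth identification of Lemma~\ref{cor1}. First I would invoke Lemma~\ref{lem:surjc}: every class in $\hh((E/B)^2)$ is a sum of products $p_1^*(a)\cdot \gamma_1^*(b)$ with $a\in\hh_B(E)=\hh(E/B)$ and $b\in\hh_{B^2}(G)$. Passing through the base-change square \eqref{diag:basech}, the image of $res_{\kb/k}$ is then the set of sums $\sum_i p_1^*(\bar a_i)\cdot \gamma_1^*(\bar b_i)$ where $\bar a_i\in\hhr(X)$ (by the definition of $\hhr(X)$) and $\bar b_i\in res_{\kb/k}(\hh_{B^2}(G))\subseteq \hh_{B^2}(G_{\kb})$.

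Next I would translate the ordinary product into composition of endomorphisms: since $X_{\kb}$ is cellular, Lemma~\ref{cor1} yields an isomorphism $(\hh(X_{\kb}^2),\circ)\simeq \End_\Rr(\hh(X_{\kb}))$. A short projection-formula computation shows
\[
f_{p_1^*(\bar a)\cdot \gamma_1^*(\bar b)}(x)=p_{2*}\bigl(p_1^*(x\bar a)\cdot \gamma_1^*(\bar b)\bigr)=f_{\gamma_1^*(\bar b)}(x\bar a),
\]
so $p_1^*(\bar a)\cdot \gamma_1^*(\bar b)$ corresponds to the composite $f_{\gamma_1^*(\bar b)}\circ m_{\bar a}$, where $m_{\bar a}$ denotes multiplication by $\bar a$. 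Consequently the image of $res_{\kb/k}$, as a subring of $\End_\Rr(\hh(X_{\kb}))$ under composition, is generated by the multiplication operators $m_{\bar a}$ with $\bar a\in\hhr(X)$, together with the operators $f_{\gamma_1^*(\bar b)}$ arising from $\bar b\in res_{\kb/k}(\hh_{B^2}(G))$.

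Finally, I would identify this second family of generators using Corollary~\ref{thm:endpres} applied to the split group $G_{\kb}$: the image of $\hh_{B^2}(G_{\kb})$ in $\End_\Rr(\hh(X_{\kb}))$ is generated by the push-pull operators $q_i^*q_{i*}$ together with multiplication by the image of the characteristic map $\Ss_{\kb}\to \hh(X_{\kb})$. Restricted to the subring $res_{\kb/k}(\hh_{B^2}(G))$, the push-pull part is captured in full because the corresponding Bott--Samelson classes $[X'_{I_w}]_B$ from Lemma~\ref{lem:bott} live in $\hh_{B^2}(G)$ already over $k$, while the multiplication part comes from $\Ss\subseteq \Ss_{\kb}$. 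Since the composition $\Ss\to\hh(E/B)\to\hh(X_{\kb})$ (the characteristic map of the twisted form followed by restriction) factors through classes defined over $k$, its image lies in $\hhr(X)$ and is therefore already captured by the multiplication-by-$\hhr(X)$ generators.

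The main obstacle I anticipate is the careful bookkeeping across the several identifications: keeping track of which product (cup versus convolution) is used at each stage, verifying that the cup-product factor $p_1^*(\bar a)$ on $(X_{\kb})^2$ really translates into pre-composition by $m_{\bar a}$ (not post-composition, which is what $p_2^*$ would give), and checking that the Bott--Samelson generators of $\hh_{B^2}(G)$ restrict to those of $\hh_{B^2}(G_{\kb})$ so that one genuinely recovers all push-pull operators and not merely a proper subalgebra of them.
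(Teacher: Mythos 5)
Your proposal is correct and follows essentially the same route as the paper, whose proof is precisely the combination of diagram \eqref{diag:basech}, Lemma~\ref{lem:surjc} and Corollary~\ref{thm:endpres}; you have merely spelled out the bookkeeping (the projection-formula identity $f_{p_1^*(\bar a)\gamma_1^*(\bar b)}=f_{\gamma_1^*(\bar b)}\circ m_{\bar a}$, the surjectivity of $\hh_{B^2}(G)\to\hh_{B^2}(G_{\kb})$ via Bott--Samelson classes, and the containment of the characteristic-map image in $\hhr(X)$) that the paper leaves implicit.
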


\begin{proof}
Follows by \eqref{diag:basech}, Lemma~\ref{lem:surjc} and Corollary~\ref{thm:endpres}.
\end{proof}

There is a natural action of $W$ on $\hhr(X)$ that comes from the $W$-action on $E/T$. So we can endow $\hhr(X)\otimes_\Ss \Qq_W$
with a structure of an $\Rr$-algebra. 

\begin{dfn}\label{def:ratalg}
Let $\DFR $ denote its subalgebra $\hhr(X)\otimes_\Ss\DF$. We call it the rational algebra
of push-pull operators. By $\DFR^{(m)}$ we denote its degree $m$ homogeneous component assuming that all $Y_i$'s have degree $(-1)$  and elements of $\hhr^\bullet(X)$ (and of $\Ss=\hh^\bullet_B(k)$) have degree '$\bullet$'. 
\end{dfn}

Observe that if $E$ is split, then $\DFR=\hh(X)\otimes_\Ss \DF$ does not coincide with $\DF$.

Set $N=\dim X$.

\begin{thm}\label{cor:surmain} Consider the restriction 
\[
res_{\kb/k}\colon \End_{\chm(k)}([E/B]) \longrightarrow \End_{\chm(\kb)}([X_{\kb}])
\]
on endomorphism rings of the respective motives (i.e., preserving the grading of $\hh(X)$).
Its image can be identified with $\DFR^{(0)}$ via the injective forgetful map
\[
\phi\colon \big((\hhr(X)\otimes_\Ss \hh_G(X_{\kb}^2))^{(N)},\circ\big) \longrightarrow \big(\hh^{N}(X_{\kb}^2),\circ\big).
\]
\end{thm}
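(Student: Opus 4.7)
The strategy is to combine Lemma~\ref{lem:surjc} applied both to the torsor $E$ over $k$ and to the trivial torsor $G_{\kb}$ over $\kb$, together with the identification $\hh_{B^2}(G_{\kb}) \simeq \hh_G(X_{\kb}^2)$ from diagram~\eqref{diag:comconst} and Theorem~\ref{thm:Demazure}. By definition of the category $\chm$ we have $\End_{\chm(k)}([E/B]) = \hh^N((E/B)^2)$ and $\End_{\chm(\kb)}([X_{\kb}]) = \hh^N(X_{\kb}^2)$, each equipped with the correspondence product, and $res_{\kb/k}$ is the base-change map of cycles.

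First I would apply Lemma~\ref{lem:surjc} to obtain compatible isomorphisms
\[
\hh(E/B) \otimes_{\Ss} \hh_{B^2}(G) \stackrel{\simeq}\longrightarrow \hh((E/B)^2),
\qquad
\hh(X_{\kb}) \otimes_{\Ss} \hh_{B^2}(G_{\kb}) \stackrel{\simeq}\longrightarrow \hh(X_{\kb}^2),
\]
that are intertwined by $res_{\kb/k}$ thanks to the commutativity of diagram~\eqref{diag:basech}. Under this restriction the first tensor factor $\hh(E/B) \to \hh(X_{\kb})$ has image $\hhr(X)$ by the very definition of $\hhr(X)$; the second factor $\hh_{B^2}(G) \to \hh_{B^2}(G_{\kb})$ is an isomorphism because by Theorem~\ref{thm:Demazure} both sides are identified with the formal affine Demazure algebra $\DF$, which depends only on the root datum of $G$ and on the formal group law $F$. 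Combined with $\hh_{B^2}(G_{\kb}) \simeq \hh_G(X_{\kb}^2)$ from~\eqref{diag:comconst}, this realizes the image of $res_{\kb/k}$ on correspondence rings precisely as the image of the map $\hhr(X) \otimes_{\Ss} \hh_G(X_{\kb}^2) \to \hh(X_{\kb}^2)$ defining $\phi$.

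For the injectivity of $\phi$, I would observe that $\phi$ is the restriction of the isomorphism of Lemma~\ref{lem:surjc} (applied to the split torsor over $\kb$) to the submodule $\hhr(X) \otimes_{\Ss} \hh_G(X_{\kb}^2)$ of $\hh(X_{\kb}) \otimes_{\Ss} \hh_{B^2}(G_{\kb})$. Since $\hhr(X) \hookrightarrow \hh(X_{\kb})$ by definition and $\hh_G(X_{\kb}^2) \simeq \DF$ is a free $\Ss$-module of rank $|W|$ by~\cite[Thm.~7.9]{CZZ}, tensoring with it preserves this injection. Restricting everything to the degree-$N$ part on the target then identifies the image with $\DFR^{(0)}$: under $\DF \simeq \hh_G(X_{\kb}^2)$ a generator $Y_i$ of operator degree $-1$ corresponds to a class of codimension $N-1$, so an element of $\DF$-operator degree $d$ lies in $\hh_G^{N+d}(X_{\kb}^2)$, and the degree convention of Definition~\ref{def:ratalg} translates $\DFR^{(0)}$ exactly into $(\hhr(X) \otimes_{\Ss} \hh_G(X_{\kb}^2))^{(N)}$.

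The main step to verify carefully is the reconciliation of two distinct grading conventions -- the operator degree on $\DF$ (where each $Y_i$ has degree $-1$ while elements of $\hhr^\bullet(X)$ and of $\Ss = \hh_B^\bullet(k)$ keep their natural grading) versus the cohomological codimension on $\hh_G(X_{\kb}^2)$, which are shifted by $N$. Everything else reduces to a formal chase through the already-established diagram~\eqref{diag:basech} and the identifications of Theorem~\ref{thm:Demazure} and Corollary~\ref{thm:endpres}.
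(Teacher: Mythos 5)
Your proposal is correct and follows essentially the same route as the paper: the paper's proof is a terse citation of Corollary~\ref{thm:rational} (itself a combination of diagram~\eqref{diag:basech}, Lemma~\ref{lem:surjc} and Corollary~\ref{thm:endpres}), Theorem~\ref{thm:Demazure}, and the basis argument $[\Ot_{I_w}]_G\mapsto[\Ot_{I_w}]$ from~\eqref{diag:comconst}, which is exactly the chain of identifications you spell out. Your explicit treatment of the injectivity of $\phi$ (via freeness of $\DF$ over $\Ss$) and of the degree shift by $N$ between operator degree and codimension fills in details the paper leaves implicit.
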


\begin{proof}
By~\eqref{diag:comconst} both $\hh_G(X^2)$ and $\hh(X^2)$ are free modules over $\hh_G(X)$ and $\hh(X)$ with basis given by the classes $[\Ot_{I_w}]_G$ and $[\Ot_{I_w}]$ respectively. The map $\phi$ sends $[\Ot_{I_w}]_G\mapsto [\Ot_{I_w}]$ and 
leaves the coefficients invariant.
The result follows by Corollary~\ref{thm:rational}, Corollary~\ref{thm:endpres} and Theorem~\ref{thm:Demazure}.
\end{proof}

We say that a (co-)homology theory $\hh$ satisfies the Dimension Axiom if
\begin{itemize}
\item[(Dim)] For any smooth variety $Y$ over $k$ we have $\hh^{n}(Y)=0$ for all $n>\dim Y$.
\end{itemize}

\begin{ex}
Any theory $\hh$ over a field $k$ of characteristic $0$ obtained by specialization of coefficients of the Lazard ring (e.g. Chow groups, connective $K$-theory, algebraic cobordism $\Omega$) satisfies (Dim). 
The graded $K$-theory $K_0(-)[\beta,\beta^{-1}]$ of \cite[Example~1.1.5]{LM} does not satisfy (Dim).
\end{ex}

\medskip

Observe that the image of the characteristic map $c\colon \Ss\to \hh(X)$ is contained
in $\hhr(X)$ (see \cite[Thm.~4.5]{GZ}). Consider both the induced map $\mathfrak{c}\colon \DF \to \DFR$ and the restriction map $res_{\kb/k}\colon (\hh_{B}(E)\otimes_\Ss \DF) \to \DFR$. We will use the following substitute of the Rost nilpotence theorem.

\begin{lem}\label{lem:kernil}
Assume that the theory $\hh$ satisfies (Dim), then the kernels of $\mathfrak{c}$ and $res_{\kb/k}$ are complete.
In other words,
there is a commutative diagram of maps of convolution rings
\[
\xymatrix{
\DF \ar[r]^-{\gamma_1^*} \ar[rd]_{\mathfrak{c}}& \hh((E/B)^2)\ar@{>>}[d]^{res_{\kb/k}} \\
 & \DFR}
\]
with complete kernels (cf.\cite[Ch.2, Lemma 2.2]{Wei}).
\end{lem}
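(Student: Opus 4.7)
The plan is to reduce both statements to a calculation involving the characteristic map $c: \Ss \to \hhr(X)$ and then invoke the Dimension Axiom. By Lemma~\ref{lem:surjc}, Theorem~\ref{thm:Demazure} and the freeness of $\DF$ as an $\Ss$-module, there are natural identifications
\[
\hh((E/B)^2) \simeq \hh_B(E) \otimes_\Ss \DF \quad \text{and} \quad \DFR = \hhr(X) \otimes_\Ss \DF
\]
under which $\gamma_1^*$ becomes $x \mapsto 1\otimes x$, while $res_{\kb/k}$ is induced by the restriction $\hh_B(E)\to\hhr(X)$. Commutativity of the triangle then reduces to the obvious factorization of the characteristic map $\Ss \to \hh_B(E) \to \hhr(X)$. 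By freeness of $\DF$ over $\Ss$, the kernels identify as $\ker\mathfrak{c}=\ker(c)\cdot\DF$ and $\ker(res_{\kb/k})=\ker(\hh_B(E)\to\hhr(X))\otimes_\Ss\DF$.

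The next step is to invoke the Dimension Axiom. Since $c$ is a graded ring homomorphism and $\hhr(X)\subseteq\bigoplus_{i=0}^{N}\hh^i(X)$ by (Dim), where $N=\dim X$, the $(N+1)$st power $I_\Ss^{N+1}$ of the augmentation ideal $I_\Ss\subset\Ss=\Rr[[T^*]]_F$ lies in $\ker(c)$. Similarly, by property (Tor) one has $\hh_B(E)\cong\hh(E/B)$, which is concentrated in degrees $\leq N$ by (Dim); in particular the kernel of $\hh_B(E)\to\hhr(X)$ contains the open ideal $I_\Ss^{N+1}\cdot\hh_B(E)$. Hence both kernels contain a power of the augmentation ideal of $\Ss$ acting on the corresponding source ring.

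Finally, the source rings $\DF$ and $\hh_B(E)\otimes_\Ss\DF$ are $I_\Ss$-adically complete: $\Ss$ is complete by construction as a formal group algebra, $\DF$ is a free $\Ss$-module of finite rank $|W|$, and $\hh_B(E)\cong\hh(E/B)$ is a finite $\Rr$-module on which $I_\Ss$ acts by raising degree and is therefore nilpotent. Consequently each source ring is complete with respect to any ideal containing a power of $I_\Ss$, and so the kernels in question are complete in the sense required. This then serves as the substitute for Rost nilpotence that allows one to lift idempotents from $\DFR$ in the sequel, as in \cite[Ch.2, Lemma 2.2]{Wei}.

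The main obstacle will be to verify the compatibility of the $I_\Ss$-adic topology with the convolution ring structures: one must check that the identification of Lemma~\ref{lem:surjc} respects the two natural $\Ss$-module structures (the one on $\hh((E/B)^2)$ coming from the pullback along $E^2\to\Spec k$ via $B^2$-equivariance, and the one on $\hh_B(E)\otimes_\Ss\DF$ coming from either tensor factor), and that the kernels are ideals for the convolution product rather than merely $\Ss$-submodules. Once these compatibilities are in place, the conclusion follows from standard adic completeness of finite modules over complete local rings together with the finite-degree vanishing supplied by the Dimension Axiom.
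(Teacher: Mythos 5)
Your reduction of both kernels to the coefficient rings via $\hh((E/B)^2)\simeq\hh_B(E)\otimes_\Ss\DF$ and $\ker\mathfrak{c}=\ker(c)\cdot\DF$ is the same starting point as the paper's, but the completeness argument that follows is backwards. The inference ``each source ring is complete with respect to any ideal containing a power of $I_\Ss$'' is false: if $J\supseteq I_\Ss^{N+1}R$ then $J^n\supseteq I_\Ss^{(N+1)n}R$, so the $J$-adic topology is \emph{coarser} than the $I_\Ss$-adic one, and completeness does not pass to coarser topologies (take $J=R$, or any ideal containing a nontrivial idempotent $e$: then $e\in J^n$ for all $n$ and $\sum_n e$ does not converge, so $J$ is not a complete ideal in the sense of \cite[Ch.2, Lemma 2.2]{Wei}). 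What is actually needed is the \emph{opposite} containment at the level of powers: $(\ker\mathfrak{c})^n\subseteq(\ker c)^n\cdot\DF\subseteq I_\Ss^n\cdot\DF$. This uses two facts you do not establish: (a) $\ker c\subseteq I_\Ss$ (the characteristic map is the identity in degree $0$), and (b) the convolution product of $n$ elements with coefficients in $\ker c$ again has coefficients in $(\ker c)^n$ --- which holds because $\ker c$ is stable under the Weyl action and the Demazure operators $\Delta_{-i}$ occurring in the relations $Y_iu=s_i(u)Y_i+\Delta_{-i}(u)$, i.e.\ because $c$ commutes with push-pull operators. This is precisely the ``compatibility'' you defer to the end as the main obstacle; it is not a routine check to be postponed but the heart of the proof. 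Once (a) and (b) are in place, $\sum_n x_n$ with $x_n\in(\ker\mathfrak{c})^n$ converges coefficientwise in the complete ring $\Ss$, which is the paper's argument; your observation that $\ker\mathfrak{c}\supseteq I_\Ss^{N+1}\DF$ is then not needed at all.

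The same problem is sharper for $res_{\kb/k}$. Since $I_\Ss$ acts nilpotently on $\hh(E/B)$ by (Dim), the ideal $I_\Ss^{N+1}\cdot\hh_B(E)$ is zero, so saying the kernel contains it carries no information, and ``complete with respect to any ideal containing a power of $I_\Ss$'' would again certify every ideal of the discrete ring $\hh((E/B)^2)$. The correct route is that $\ker\bigl(res_{\kb/k}\colon\hh(E/B)\to\hhr(X)\bigr)$ lies in $\hh^{>0}(E/B)$, so by the multiplicativity (b) the $n$-th power of $\ker res_{\kb/k}$ has coefficients in degrees $\geq n$ and vanishes for $n>\dim E/B$ by (Dim); the kernel is therefore nilpotent, hence complete. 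So the skeleton of your reduction matches the paper, but the decisive step --- bounding the powers of the kernels from above via the stability of $\ker c$ (resp.\ $\ker res_{\kb/k}$) under the operators defining the convolution product --- is missing, and the substitute you offer does not prove completeness.
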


\begin{proof} If $\mathfrak{c}(x)=0$ (resp. $res_{\kb/k}(x)=0$) then $x=\sum_w a_w Y_{I_w}$ with $a_w\in \Ss$, $c(a_w)=0$ (resp. with $a_w\in\hh(E/B)$, $res_{\kb/k}(a_w)=0$).
Since $c$ (resp. $res_{\kb/k}$) commutes with push-pull operators on $\Ss$ (resp. on $\hh(E/B)$), the product of $n$ such elements
$x_1\circ x_2\circ\ldots\circ x_n$ corresponds to 
\[
(\sum_w a_w^1 Y_{I_w})\ldots (\sum_w a_w^n Y_{I_w})=\sum_w a_{w,n} Y_{I_w}, \quad a_{w,n} \in (\ker c)^n,\;\text{(resp. }(\ker res_{\kb/k})^n)
\]

Since $\ker c$ is contained in the augmentation ideal, and $\Ss$ is complete, then the series $\sum_n a_{w,n}$ converges in $\Ss$ (resp. $\ker res_{\kb/k}$ is contained in $\hh^{>0}(E/B)$, then $a_{w,n}=0$ for $n>\dim E/B$),  thus $\ker \mathfrak{c}$ is complete and $\ker res_{\kb/k}$ is nilpotent, hence complete.
\end{proof}

\begin{lem}\label{for:dim lift} 
If $E$ is a generic $G$-torsor in the sense of \cite[\S3, p.108]{GMS}, then the map $\gamma_1^*$ and, hence, $\mathfrak{c}$, of the lemma~\ref{lem:kernil} is surjective.
\end{lem}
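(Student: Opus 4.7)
The plan is to reduce the surjectivity of $\gamma_1^*$ to the surjectivity of the characteristic map $c\colon\Ss\to\hh(E/B)$, and then obtain surjectivity of $\mathfrak{c}$ as a direct consequence.

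By Lemma~\ref{lem:surjc} the map
\[
(p_1^*,\gamma_1^*)\colon \hh_B(E)\otimes_\Ss \DF\stackrel{\simeq}{\longrightarrow}\hh((E/B)^2)
\]
is an isomorphism of convolution rings, and under this identification $\gamma_1^*$ corresponds to the canonical inclusion $y\mapsto 1\otimes y$. Because $\DF$ is free of finite rank over $\Ss$ with basis $\{Y_{I_w}\}_{w\in W}$, the tensor product on the left is a free $\hh_B(E)$-module with the same basis. Hence $1\otimes \DF$ exhausts the tensor product precisely when every class $a\in\hh_B(E)\simeq\hh(E/B)$ lies in the image of the characteristic map $c\colon\Ss=\hh_B(k)\to\hh_B(E)$: if $a=c(s)$ then $a\otimes Y_{I_w}=1\otimes sY_{I_w}\in 1\otimes\DF$. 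Conversely, any element of $\hh_B(E)$ outside $c(\Ss)$ immediately produces an element of $\hh_B(E)\otimes_\Ss\DF$ not in $1\otimes\DF$. Therefore surjectivity of $\gamma_1^*$ is equivalent to surjectivity of $c$.

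The surjectivity of $c$ for a generic $G$-torsor is the defining (or an immediate) property of genericity recorded in \cite[\S3, p.108]{GMS}. Concretely, $E$ is obtained as the generic fiber of a $G$-torsor $U\to U/G$ coming from a good system $(V,U)$ as in Example~\ref{ex:equi}; since $U$ is an open subset of a linear representation of $G$ on which $G$ acts freely, the extended homotopy invariance axiom (HI) gives $\hh_B(U)\simeq\Ss$, and the localization axiom (Loc) applied to the complement of the generic fiber forces every class in $\hh_B(E)$ to lift to one in $\hh_B(U)$. The composite of these two maps is precisely $c$, so $c$ is surjective. This argument is classical for Chow groups and transfers to arbitrary $\hh$ thanks to the axioms (HI) and (Loc) imposed on $\hh$ throughout the paper.

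Finally, by Lemma~\ref{lem:kernil} we have $\mathfrak{c}=res_{\kb/k}\circ\gamma_1^*$, and the map $res_{\kb/k}\colon\hh((E/B)^2)\twoheadrightarrow\DFR$ is surjective directly from the definition $\DFR=\hhr(X)\otimes_\Ss\DF$, where $\hhr(X)$ is by construction the image of $res_{\kb/k}\colon\hh(E/B)\to\hh(X_{\kb})$. Thus surjectivity of $\gamma_1^*$ yields surjectivity of $\mathfrak{c}$. The only substantive step is the surjectivity of $c$ for generic $E$; this is where the genericity hypothesis is used in an essential way and is the main obstacle in the proof, although it is a classical fact whose extension to arbitrary oriented theories is routine given the axioms.
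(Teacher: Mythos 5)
Your proof is correct, but it is packaged differently from the paper's. The paper argues directly on the total spaces: genericity gives a (pro-)open $G$-equivariant embedding $E\hookrightarrow\mathbb{A}^N$, so the projection $E\times_k G^i\to G^i$ in the definition of $\gamma_i$ factors through $\mathbb{A}^N\times_k G^i\to G^i$, and (HI) plus (Loc) immediately give surjectivity of $\gamma_i^*$ — no mention of the characteristic map is needed. You instead first reduce, via the isomorphism of Lemma~\ref{lem:surjc} and the freeness of $\DF$ over $\Ss$, to the surjectivity of $c\colon\Ss\to\hh_B(E)$, and then prove that by the same (HI)+(Loc) mechanism applied to $E\hookrightarrow U\hookrightarrow V$. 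Both routes rest on the identical geometric input, so yours is a valid (slightly longer) detour; its one advantage is that it isolates the classical statement ``$c$ is surjective for versal torsors'' (which is the content of \cite[Thm.~4.5]{GZ} and its Chow-group ancestors), making the role of genericity transparent. Two small inaccuracies to fix: (HI) and (Loc) only give a surjection $\Ss=\hh_B(V)\twoheadrightarrow\hh_B(U)$, not an isomorphism $\hh_B(U)\simeq\Ss$ (there may be a kernel supported on $V\setminus U$ — surjectivity is all you need anyway); and surjectivity of $c$ is not the ``defining property'' of genericity in \cite{GMS} — the definition is that $E$ is the generic fiber of $U\to U/G$, and surjectivity of $c$ is a consequence, which you do in fact then derive. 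Finally, note that both your argument and the paper's silently use that $\hh_B(E)$ (resp.\ $\hh_{H^2}(E\times_kG^i)$) is the colimit over the open sets $p^{-1}(W)$, since the generic fiber is only pro-open; this continuity is standard for theories of Levine--Morel type but is not among the listed axioms.
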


\begin{proof}
Observe that if $E$ is generic, then it admits a generically open $G$-equivariant embedding
into $\mathbb{A}^N_k$. So 
the projection $E\times_k G^i\to G^i$ in the definition of $\gamma_i$ factors through $\mathbb{A}^N_k\times_k G^i$.
By (Loc) and (HI) the induced pullback $\gamma_i^*$ is surjective. 
\end{proof}

\section{Applications  and examples}

Our main application is the following

\begin{thm}\label{mainthm} Let $G$ be a split semisimple linear algebraic group over a field $k_0$ and let $E$ be a $G$-torsor over a field extension $k$ of $k_0$. Let $\hh$ be an oriented cohomology theory over $k$ that satisfies both (CD) and (Dim) axioms.
Let $\langle [E/B] \rangle_\hh$ (resp. $\langle [E/B]_* \rangle_\hh$) denote the pseudo-abelian subcategory generated by the (resp. non-graded) $\hh$-motive of $E/B$. Then
\begin{itemize}
\item[(i)] There are isomorphisms between the Grothendieck groups
\[
K_0(\langle [E/B] \rangle_\hh)\simeq K_0\big(\DFR^{(0)}\big)\quad\text{and}\quad K_0(\langle [E/B]_* \rangle_\hh)\simeq K_0\big(\DFR\big).
\]

\item[(ii)] There is a 1-1 correspondence between direct sum decompositions of the $\hh$-motive $[E/B]$ and direct sum decompositions
of the $\DFR^{(0)}$-module $\DFR^{(0)}$. Moreover, any two direct summands in the motivic decomposition of $E/B$ that are Tate twists of each other 
correspond to isomorphic $\DFR$-modules.

\item[(iii)] If $E$ is generic, then the algebra $\DFR$ in (i) and (ii) can be replaced by the algebra $\DF$.
\end{itemize}
\end{thm}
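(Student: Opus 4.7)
The plan is to combine Corollary~\ref{cor:end}, which recasts each subcategory $\langle [E/B]\rangle_{\hh}$ as the category of finitely generated projective right modules over the relevant endomorphism ring, with the ring identifications from Theorem~\ref{cor:surmain}. For the graded case, Theorem~\ref{cor:surmain} would give an injection of $\End_{\chm(k)}([E/B])/\ker(res_{\kb/k})$ into $\End_{\chm(\kb)}([X_{\kb}])$ whose image is $\DFR^{(0)}$. Reading the same argument without the degree-zero constraint should identify the image of the non-graded analogue with the full algebra $\DFR$. So I would first establish surjections $\End_{\chm(k)}([E/B]) \twoheadrightarrow \DFR^{(0)}$ and $\End_{\chm_*(k)}([E/B]_*) \twoheadrightarrow \DFR$, after which (i) will follow provided these surjections induce isomorphisms on $K_0$ of finitely generated projective modules.

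The key step will be to establish nilpotence of the two kernels. By Lemma~\ref{lem:kernil}, every element $x$ in $\ker(res_{\kb/k})$ has the property that any $n$-fold convolution product $x_1\circ\cdots\circ x_n$ expands in a Bott--Samelson basis with coefficients in $(\hh^{>0}(E/B))^n$. Invoking the Dimension axiom (Dim) one would force these coefficients to vanish once $n > \dim E/B$, so the kernel is nilpotent. Since idempotents always lift across nilpotent two-sided ideals, the pseudo-abelian categories of finitely generated projective modules over $\End([E/B])$ and $\DFR^{(0)}$ (and similarly in the non-graded case) would be equivalent, and part (i) will follow by passing to $K_0$.

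For (ii), I would argue that direct sum decompositions of $[E/B]$ correspond bijectively to complete orthogonal decompositions of $1$ in $\End_{\chm(k)}([E/B])$ into idempotents. Idempotent lifting across the nilpotent kernel then gives a bijection with such decompositions in $\DFR^{(0)}$, which in turn correspond to direct sum decompositions of $\DFR^{(0)}$ as a right module over itself. For the Tate twist assertion, I would exploit that passing from the graded to the non-graded category identifies all Tate twists; on the module side this corresponds to extending scalars along the inclusion $\DFR^{(0)} \hookrightarrow \DFR$, and two summands of $[E/B]$ differing only by a Tate twist will determine $\DFR^{(0)}$-modules that become isomorphic as $\DFR$-modules after this extension.

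For (iii), Lemma~\ref{for:dim lift} shows that for a generic $G$-torsor $E$ the map $\gamma_1^*$, and hence $\mathfrak{c}\colon \DF \to \DFR$, is surjective; the same (Dim)-based argument applied to Lemma~\ref{lem:kernil} should show that its kernel is nilpotent as well. Idempotent lifting across this surjection would then replace $\DFR$ by $\DF$ throughout (i) and (ii). The main technical point, and arguably the only substantive obstacle, will be verifying genuine nilpotence (not merely completeness) of these kernels, which is why the hypothesis (Dim) is imposed: without it the idempotent-lifting step would need to be argued using the more delicate theory of lifting idempotents across complete ideals.
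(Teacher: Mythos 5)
Your proposal follows essentially the same route as the paper: Theorem~\ref{cor:surmain} plus Lemma~\ref{lem:kernil} give a surjection of the endomorphism ring of the motive onto $\DFR^{(0)}$ (resp.\ $\DFR$) with well-behaved kernel, after which (i) and (ii) follow by lifting idempotents and passing to $K_0$ via Corollary~\ref{cor:end}, and (iii) follows from the corresponding statement for $\mathfrak{c}$. The one inaccuracy is your claim that the kernel of $\mathfrak{c}\colon\DF\to\DFR$ is nilpotent: Lemma~\ref{lem:kernil} only yields that it is \emph{complete} (the coefficients lie in powers of $\ker c\subseteq\Ss$, and the relevant series merely converges in the complete ring $\Ss$ rather than terminating), but this does not affect the argument since idempotent lifting and the $K_0$-isomorphism hold for surjections with complete kernel, which is exactly the form in which the paper invokes Weibel's lemma.
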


\begin{proof}Consider a graded endomorphism ring of the $\hh$-motive of $E/B$\[
C_F=(End_{\chm(k)}^\bullet ([E/B]),\circ).\] 
Our main result (Theorem~\ref{cor:surmain}) together with Lemma~\ref{lem:kernil} says that the restriction map gives a surjective ring homomorphism  with complete kernel
\[
res_{\kb/k}\colon C_F \to \DFR.
\]
The part (i) then follows from \cite[Ch.~2, Lemma~2.2]{Wei}.

The part (ii) follows from \cite[\S2 and Prop.~2.6]{PSZ} applied to the map $res_{\kb/k}$ 
(an isomorphism between Tate twists in the non-graded category of motives corresponds to an isomorphism between idempotents of \cite[\S2.1]{PSZ}).

The part (iii) follows from Lemma~\ref{lem:kernil} applied to the map $\mathfrak{c}$.
\end{proof}

\begin{ex}
If $G$ is special, i.e. $G$ is simple simply-connected of type $A$ or $C$, then any $G$-torsor $E$ splits, and the respective non-graded subcategory $\langle [E/B]_*\rangle_\hh$ is generated by the motive of a point. So by (ii) and (iii) of the theorem all indecomposable direct summands of $\DFR$ (resp. of $\DF$) are isomorphic to the 
$\DFR$-(resp. $\DF$-)module $\Ss$ and 
\[
K_0(\DFR)\simeq K_0(\DF)\simeq \Z.
\]
\end{ex}

\begin{lem}
If the coefficient ring $\Rr$ is Artinian, then both $C_F$ and $\DFR$ and, hence, the categories $\langle [E/B]_*\rangle_\hh$ and $Proj\; \DFR$ satisfy the Krull-Schmidt property (uniqueness of a direct sum decomposition).
\end{lem}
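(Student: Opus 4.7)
The plan is to observe that, when $\Rr$ is Artinian, $\DFR$ is itself a left and right Artinian ring (hence semiperfect), and then to transfer semiperfectness to $C_F$ along the surjection from Lemma~\ref{lem:kernil}. Semiperfect rings satisfy Krull--Schmidt for finitely generated projective modules, which via Corollary~\ref{cor:end} yields the two category-level assertions.

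First I would verify that $\DFR$ is finitely generated as an $\Rr$-module. The algebra $\DF$ is a free $\Ss$-module of rank $|W|$ (with basis given by the Bott--Samelson classes $Y_{I_w}$), so $\DFR=\hhr(X)\otimes_\Ss \DF$ is a free $\hhr(X)$-module of rank $|W|$. The ring $\hhr(X)$ sits inside $\hh(X_{\kb})$, which by property (CD) is free of finite rank $|W|$ over $\Rr$. Since any commutative Artinian ring is Noetherian, every $\Rr$-submodule of a finitely generated module is finitely generated; hence $\hhr(X)$, and therefore $\DFR$, is finitely generated over $\Rr$. Any associative algebra that is module-finite over a commutative Artinian ring is itself left and right Artinian, so $\DFR$ is Artinian, hence semiperfect.

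For $C_F$ I would use the observation, implicit in the proof of Lemma~\ref{lem:kernil}, that the kernel of $res_{\kb/k}\colon C_F\twoheadrightarrow \DFR$ is in fact nilpotent (the coefficients $a_{w,n}$ vanish once $n$ exceeds $\dim(E/B)$), not merely pro-nilpotent. A nilpotent two-sided ideal is contained in the Jacobson radical, so $C_F/J(C_F)\simeq \DFR/J(\DFR)$ is semisimple; and any idempotent of $C_F/J(C_F)$ can be lifted first to $\DFR$ by semiperfectness of $\DFR$ and then from $\DFR$ to $C_F$ through the nilpotent kernel by a standard successive-approximation argument. Thus $C_F$ is semiperfect as well, giving Krull--Schmidt for its finitely generated projective modules, and in particular for $\langle [E/B]_*\rangle_\hh$. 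The hard part will be justifying genuine nilpotence (rather than just completeness) of $\ker(res_{\kb/k})$---this is precisely what makes the idempotent lifting immediate and delivers semiperfectness at the level of $C_F$.
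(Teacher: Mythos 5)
Your argument is correct, but it reaches the conclusion by a different mechanism than the paper for the ring $C_F$. The paper's proof is shorter: it asserts that \emph{both} $\DFR$ and $C_F$ are Artinian (justifying this only by the module-finiteness of $\DFR$ over $\Rr$), invokes Hopkins--Levitzki to get Noetherian, concludes the tautological modules have finite length, and then applies the classical Krull--Remak--Schmidt theorem for finite-length modules. You instead stop at ``$\DFR$ is Artinian, hence semiperfect,'' and transfer semiperfectness to $C_F$ by lifting idempotents through the kernel of $res_{\kb/k}$. Two remarks on the comparison. First, your worry at the end is unfounded: the nilpotence (not just completeness) of $\ker(res_{\kb/k})$ is already established in the proof of Lemma~\ref{lem:kernil} --- under (Dim) the coefficients $a_{w,n}$ vanish for $n>\dim E/B$, and the paper explicitly states ``$\ker res_{\kb/k}$ is nilpotent, hence complete.'' So your idempotent-lifting step is fully available. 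Second, your route is arguably more robust at exactly the point where the paper is terse: to see that $C_F$ is Artinian one would need $C_F$ itself to be module-finite over $\Rr$ (e.g.\ via Lemma~\ref{lem:prebott} and finite generation of $\hh(E/B)$ over $\Rr$), whereas you only need Artinianness of the quotient $\DFR$ together with nilpotence of the kernel. The trade-off is that the paper's finite-length argument, when its hypotheses are granted, yields Krull--Schmidt for all finitely generated modules, while semiperfectness gives it only for finitely generated projectives --- which is all that is needed here, since by Corollary~\ref{cor:end} the category $\langle [E/B]_*\rangle_\hh$ is equivalent to finitely generated projective $C_F$-modules. One small caveat in your finiteness check for $\DFR$: the inclusion $\hhr(X)\subseteq \hh(X_{\kb})$ gives finite generation over $\Rr$ only because $\hh(\kb)$ is identified with $\Rr=\hh(k)$ (as the paper does implicitly, e.g.\ in the proof of Lemma~\ref{lem:prebott}); it is worth stating that identification explicitly.
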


Observe that in general the ring $\DFR$ is not Krull-Schmidt (and not semi-simple).

\begin{proof}
If $\Rr$ is Artinian, then both $\DFR$ and $C_F$ are Artinian (as $\DFR$ is finite dimensional over $\Rr$). So they are both Noetherian which implies that the respective tautological modules $\DFR$ and $C_F$ have finite length and, hence, the Krull-Schmidt property holds for both  $\DFR$ and $C_F$. 
\end{proof}

As a direct application of the main result of \cite{PSZ} one obtains the following characterization of modular representations of the (affine) nil-Hecke algebra 
($F$ is an additive formal group law and $\hh(-)=CH(-;\F_p)$).

\begin{cor}\label{cor:nilHeckeex} Let $G$ be a split semisimple linear algebraic group over a field $k_0$.
Consider the affine nil-Hecke algebra
$\mathbf{H}_{nil,p}$ for $G$ with coefficients in $\Rr=\F_p$, $p$ is a prime.
Then
\[
K_0\big(\mathbf{H}_{nil,p}\big) \simeq K_0(\langle \Ro_* \rangle),
\]
where $\Ro_*$ is the generalized (non-graded) Rost-Voevodsky motive corresponding to a generic $G$-torsor $E$ and the prime $p$.

In particular, all indecomposable graded submodules of $\mathbf{H}_{nil,p}$ 
are isomorphic to a graded indecomposable submodule $P_*$ corresponding to $\Ro_*$. Moreover, they are free $\Ss$-modules of rank $r$ that is equal to the $p$-part of the product
of $p$-exceptional degrees of $G$ and we have
\[
\mathbf{H}_{nil,p} \simeq Mat_{|W|/r} (\End(P_*)).
\]
\end{cor}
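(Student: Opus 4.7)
The plan is to specialize Theorem~\ref{mainthm} to $\hh(-)=CH(-;\F_p)$, $\Rr=\F_p$, and a generic $G$-torsor $E$, and then feed in the PSZ decomposition of $[E/B]$. Under this specialization the formal group law $F$ is additive, so $\DF=\mathbf{H}_{nil,p}$ as noted in the introduction; both axioms (CD) and (Dim) hold for Chow groups with $\F_p$-coefficients; and by Theorem~\ref{mainthm}(iii) the rational algebra $\DFR$ may be replaced by $\DF$. The non-graded PSZ decomposition \cite[Theorem~5.17]{PSZ} gives $[E/B]_*\simeq \Ro_*^{\oplus |I|}$, hence $\langle [E/B]_*\rangle_\hh=\langle \Ro_*\rangle$, and combining this with Theorem~\ref{mainthm}(i) yields the first claimed isomorphism $K_0(\langle \Ro_*\rangle)\simeq K_0(\mathbf{H}_{nil,p})$.

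For the \textquotedblleft in particular\textquotedblright\ part, let $[E/B]\simeq\bigoplus_{i\in I}\Ro(i)$ be the graded PSZ decomposition. By Theorem~\ref{mainthm}(ii) this corresponds to a direct sum decomposition of $\mathbf{H}_{nil,p}$ as a module over itself into indecomposable graded submodules $P_i$, and any two $P_i$ coming from Tate twists of the same indecomposable motive are mutually isomorphic as ungraded modules. Since all summands of $[E/B]$ are Tate twists of the single indecomposable motive $\Ro$, every $P_i$ is isomorphic to one module $P_*$ corresponding to $\Ro_*$. To pin down the $\Ss$-rank of $P_*$ I would invoke the explicit Poincar\'e polynomial formula for $\Ro|_{\kb}$ established in \cite[\S5]{PSZ}, which forces $|I|=|W|/r$, where $r$ is the $p$-part of the product of the $p$-exceptional degrees of $G$. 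Since $\mathbf{H}_{nil,p}$ is $\Ss$-free of total rank $|W|$ and decomposes as a direct sum of $|W|/r$ copies of $P_*$ up to graded shift, $P_*$ is $\Ss$-free of rank~$r$.

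Finally, passing to endomorphism rings, the regular representation identifies $\End_{\mathbf{H}_{nil,p}}(\mathbf{H}_{nil,p})$ with $\mathbf{H}_{nil,p}^{op}$, while the decomposition $\mathbf{H}_{nil,p}\simeq P_*^{\oplus |W|/r}$ yields $\End_{\mathbf{H}_{nil,p}}(\mathbf{H}_{nil,p})\simeq Mat_{|W|/r}(\End(P_*))$, which gives the desired ring isomorphism (matrix algebras are stable under $({-})^{op}$ up to transposition of entries). The main obstacle is the counting step $|I|=|W|/r$: this is not formal from Theorem~\ref{mainthm}, but relies on the explicit Poincar\'e polynomial computation for the generalized Rost--Voevodsky motive of a generic torsor carried out in \cite[\S5]{PSZ}, itself a nontrivial structural input. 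All remaining pieces are direct applications of Theorem~\ref{mainthm} together with elementary bookkeeping with endomorphism algebras.
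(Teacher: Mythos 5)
Your proposal is correct and follows essentially the same route as the paper: specialize Theorem~\ref{mainthm} (using part (iii) for generic $E$ to replace $\DFR$ by $\DF=\mathbf{H}_{nil,p}$), feed in the PSZ decomposition $[E/B]\simeq\bigoplus_i\Ro(i)$, and compute the $\Ss$-rank via the Poincar\'e polynomial of $\Ro|_{\kb}$, whose value at $t=1$ is the $p$-part of the product of $p$-exceptional degrees. The paper's own proof is just a terser version of the same argument, phrasing the rank computation directly as ``the $\Ss$-rank equals the number of Tate motives in $\Ro|_{\kb}$'' rather than first deriving $|I|=|W|/r$ and dividing.
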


\begin{proof}
The $\Ss$-rank coincides with the number of Tate motives in the decomposition of $\Ro$ over a splitting field of $E$, that is $g_p(1)=\prod_{i=1}^r \tfrac{1-t^{d_ip^{k_i}}}{1-t^{d_i}}|_{t=1}$ (in the notation of \cite{PSZ}) which is equal to the $p$-part $p^{\sum_{i=1}^r k_i}$ of $p$-exceptional degrees of \cite[p.73]{Ka}.
The last statement follows from the fact that the ring of graded endomorphisms of $\Ro_*$ coincides with the ring of endomorphisms of $\Ro$.
\end{proof}

We now switch to integer coefficients. Recall that in this case the Krull-Schmidt property usually fails.  

\begin{ex}
Consider the root system of type $A_1$. In this case $T^*=\Z \omega$  or $T^*=\Z\alpha$, $\alpha=2\omega$ is the simple root and $\omega$ is the fundamental weight. The Weyl group $W=\{1,s\}$ acts by $s\colon \omega\mapsto -\omega$, where $s$ is the simple reflection. By definition, $\Ss=\Rr[[x]]_F$ (where $x=x_\omega$ or $x=x_\alpha$), $\Qq=\Ss[\tfrac{1}{x}]$, $\Qq_W=\{q(x)\delta_w\mid q(x)\in \Qq,\; w\in W\}$ with \[
q(-_Fx)\delta_s=s(q(x))\delta_s=\delta_s q(x),
\] where $-_F x$ is the formal inverse of $x$. Observe that $x_\alpha=x_{\omega+\omega}=F(x_\omega,x_\omega)$ in $\Ss$. 

The $\Rr$-algebra $\DF$ 
is a free left $\Ss$-submodule of rank 2 in $\Qq_W$ with basis \[\{1,Y=\tfrac{1}{-_Fx_\alpha}+\tfrac{1}{x_\alpha}\delta_s\}.\]
It satisfies the relations
\[
Y^2=\kappa Y\text{ and }Yq(x)=q(-_F x)Y+\Delta(q(x)),
\]
where $\kappa=\tfrac{1}{-_F x_\alpha}+\tfrac{1}{x_\alpha}$ and  $\Delta(q(x))=\tfrac{q(x)-q(-_F x)}{-_F x_{\alpha}}$.

Let $p=a+bY$, where $a,b\in \Ss$, be an idempotent in $\DF$, i.e., $p^2=p$. 
Then we obtain in $\Ss$
\begin{equation}\label{idem}
a^2+b\Delta(a)=a\;\text{ and }\;(a+s(a)+s(b)\kappa+\Delta(b))b=b.
\end{equation}

In the case $\hh(-)=CH(-;\Z)$  ($F$ is additive) we have $\Rr=\Z$, $\Ss=\Z[x]$, $\kappa=0$, $-_F x=-x$, $x_\alpha=2x_\omega$ and 
the second equation turns into
\[
(a+s(a)+\Delta(b))b=b.
\]

If $x=x_\alpha$ ($G=PGL_2$), the polynomials $a+s(a)$ and $\Delta(b)$ are divisible by $2$, so $b=0$. Hence, $a=0$ or $1$ from the first equation.
So $\DF$ is an indecomposable module over itself.
By Theorem~\ref{mainthm} this implies that both graded and non-graded
motives $[E/B]$ and $[E/B]_*$ of a generic conic are indecomposable.

If $x=x_\omega$ ($G=SL_2$) and $p$ is homogeneous, we get
$a\in \Z$, $b=cx$, $c\in \Z$ and the system \eqref{idem} has solutions only for $c=\pm 1$.
Therefore, the algebra $\DF$ has only two indecomposable graded submodules which correspond to the idempotents $1-x_\omega Y$ and $x_\omega Y$.
In other words, the motives $[E/B]$ and $[E/B]_*$ split into a direct sum of two indecomposable motives.
\end{ex}

\bibliographystyle{plain}

\end{document}